  \newcommand{\C}{\ensuremath{\mathbb{C}}}%
  \newcommand{\R}{\ensuremath{\mathbb{R}}}%
  \newcommand{\N}{\ensuremath{\mathbb{N}}}%
  \newcommand{\Z}{\ensuremath{\mathbb{Z}}}%
  \newcommand{\tore}{\ensuremath{\mathbb{T}}}%
  \newcommand{\disc}{\ensuremath{\mathbb{D}}}%
  \newcommand\scal[2]{\ensuremath{\left\langle#1,#2 \right\rangle}}%
\newcommand\besov[2]{\ensuremath{B_{#1+}^{#2}}}
\newcommand\besovect[3]{\ensuremath{B_{#1}^{#2}\left(#3\right)_+}}
\newcommand\conv{\ensuremath{\star}}
\newcommand\equival{\ensuremath{\approx}}
\newtheorem{thm}{Theorem}[section]
\newtheorem{lemma}[thm]{Lemma}
\theoremstyle{remark}
\newtheorem*{rem}{Remark}
\theoremstyle{definition}
\newtheorem{dfn}[thm]{Definition}
\begin{document}
\author{Mikael de la Salle}
\title{Operator space valued Hankel matrices}

\address{D{\'e}partement de Math{\'e}matiques et Applications \\ {\'E}cole Normale
  Sup{\'e}rieure  \\ 45 rue d'Ulm \\ 75005 Paris}

\thanks{Partially supported by ANR-06-BLAN-0015.}
\email{mikael.de.la.salle@ens.fr}
\keywords{}

\begin{abstract}
  If $E$ is an operator space, the non-commutative vector valued $L^p$
  spaces $S^p[E]$ have been defined by Pisier for any $1 \leq p \leq
  \infty$. In this paper a necessary and sufficient condition for a matrix
  of the form $(a_{i+j})_{0 \le i,j}$ with $a_k \in E$ to be bounded in
  $S^p[E]$ is established if $1\leq p<\infty$. This extends previous
  results of Peller where $E=\C$ or $E=S^p$. The main theorem states that
  if $1 \leq p < \infty$, $(a_{i+j})_{0 \le i,j}$ is bounded in $S^p[E]$ if
  and only if there is an analytic function $\varphi$ in the vector valued
  Besov Space $\besovect{p}{1/p}{E}$ such that $a_n = \widehat \varphi(n)$
  for all $n \in \N$. In particular this condition only depends on the
  Banach space structure of $E$. We also show that the norm of the
  isomorphism $\varphi \mapsto (\widehat \varphi(i+j))_{i,j}$ grows as
  $\sqrt p$ as $p \to \infty$, and compute the norm of the natural
  projection onto the space of Hankel matrices.
\end{abstract}
\maketitle

\section*{Introduction}

This paper is devoted to the study of Hankel matrices in the vector-valued
non-commutative $L^p$-space $S^p[E]$ defined by Pisier
\cite{MR1648908}. The main result is a characterisation, for any operator
space $E$, of the norm of such matrices in terms of vector-valued Besov
spaces $\besovect{p}{s}{E}$ defined in the second section. The surprising
fact is that these norms only depend on the Banach-space structure of
$E$. The main result is the following.

If $\varphi = \sum_{n\in \N} a_n z^n$ is a formal series with $a_n$
belonging to an operator space $E$, we denote $a_n = \widehat \varphi(n)$
($\widehat \varphi(n)$ coïncides with the Fourier coefficient of $\varphi$
when $\varphi \in L^1(\tore;E)$), the Hankel matrix $\Gamma_\varphi$ is
defined by its matrix representation
\[\Gamma_\varphi = \left(\widehat \varphi(j+k)\right)_{j,k \geq 0}.\]

\begin{thm}
\label{thm=Main_thm}
  Let $1 \leq p < \infty$. A Hankel matrix $(a_{j+k})_{j,k\geq 0}$ belongs
  to $S^p[E]$ if and only if the formal series $\sum_{n \geq 0} a_n z^n$
  belongs to $\besovect{p}{1/p}{E}$.

  More precisely there is a constant $C >0$ such that for any operator
  space $E$ and any formal series $\varphi = \sum_{n \geq 0} a_k z^k$
  \[C^{-1} \left\|\varphi\right\|_{\besovect{p}{1/p}{E}}\leq
  \left\|\Gamma_\varphi \right\|_{S^p[E]} \leq C \sqrt p
  \left\|\varphi\right\|_{\besovect{p}{1/p}{E}}.\]

  Moreover the rate of growth as $\sqrt p$ is optimal already in the scalar
  case: there is a constant $c>0$ (independant of $p$) and $\varphi \in
  \besov{p}{1/p}$ such that $\left\|\Gamma_\varphi \right\|_{S^p} \geq c
  \sqrt p \left\|\varphi\right\|_{\besov{p}{1/p}}$.
\end{thm}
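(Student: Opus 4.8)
The plan is to exhibit, for each large $p$, an explicit analytic polynomial $\varphi_p$ --- a lacunary series --- for which the Besov norm stays bounded while $\left\|\Gamma_{\varphi_p}\right\|_{S^p}$ already grows like $\sqrt p$. The underlying mechanism is a mismatch between two ways of measuring a lacunary symbol: the norm in $\besov{p}{1/p}$ only ``sees'' the geometrically dominant top frequency, whereas the size of the Hankel operator is governed by the $\ell^2$-size of the whole block of coefficients, that is, by $BMOA$. These two quantities are out of balance by exactly a factor $\sqrt p$ once the number of terms is taken proportional to $p$.

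Concretely, I would take $\varphi_p=\sum_{k=1}^{n}z^{2^k}$ with $n=\lfloor p\rfloor$. Since the frequencies $2^k$ lie in pairwise distinct dyadic Littlewood--Paley blocks, the Besov norm of Section~2 collapses to a geometric sum, $\left\|\varphi_p\right\|_{\besov{p}{1/p}}^p\approx\sum_{k=1}^{n}2^{k}\approx 2^{n}$, whence $\left\|\varphi_p\right\|_{\besov{p}{1/p}}\approx 2^{n/p}\approx 2$ stays bounded independently of $p$. This is the step where lacunarity is essential: for consecutive frequencies the intermediate blocks would each contribute and the Besov norm would instead grow like $n$, destroying the estimate.

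For the lower bound I would use the elementary monotonicity $\left\|\Gamma_{\varphi_p}\right\|_{S^p}\ge\left\|\Gamma_{\varphi_p}\right\|_{S^\infty}$ together with Nehari's theorem (via Fefferman duality), which gives $\left\|\Gamma_{\varphi_p}\right\|_{S^\infty}\ge c_0\left\|\varphi_p\right\|_{BMOA}$ for a universal constant $c_0$. Bounding $BMOA$ from below is then trivial: testing the $L^2$-oscillation on the whole circle and using that $\varphi_p$ has vanishing mean yields $\left\|\varphi_p\right\|_{BMOA}\ge\left\|\varphi_p\right\|_{L^2(\tore)}=\sqrt n$. Combining, $\left\|\Gamma_{\varphi_p}\right\|_{S^p}\ge c_0\sqrt n\approx c_0\sqrt p$, and dividing by the bounded quantity $\left\|\varphi_p\right\|_{\besov{p}{1/p}}\approx 2$ gives $\left\|\Gamma_{\varphi_p}\right\|_{S^p}\ge c\sqrt p\,\left\|\varphi_p\right\|_{\besov{p}{1/p}}$ with $c$ independent of $p$, as required.

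The main obstacle is the balancing of exponents, together with the realisation that no Hilbert--Schmidt computation can furnish the lower bound: here $\left\|\Gamma_{\varphi_p}\right\|_{S^2}\approx 2^{n/2}$ is \emph{exponentially} larger than the operator norm $\approx\sqrt n$, so dividing the $S^2$-norm by $\sqrt{\operatorname{rank}}$ is far too lossy and the genuine input must be the $BMOA$ bound on $\left\|\Gamma_{\varphi_p}\right\|_{S^\infty}$. One then checks that the choice $n\sim p$ is precisely what makes $\sqrt n/2^{n/p}$ comparable to $\sqrt p$. A minor point is to confirm that the normalisation of $\besov{p}{1/p}$ fixed in Section~2 assigns the weight $2^{k}$ to the $k$-th block at the exponent $s=1/p$; the case $p=2$, where $\left\|\Gamma_\varphi\right\|_{S^2}^2\approx\sum_m(m+1)\,|\widehat\varphi(m)|^2\approx\left\|\varphi\right\|_{\besov{2}{1/2}}^2$, provides the sanity check.
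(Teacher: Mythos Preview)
Your proposal treats only the optimality clause (the ``moreover'' statement); the two-sided estimate $C^{-1}\|\varphi\|_{\besovect{p}{1/p}{E}}\le\|\Gamma_\varphi\|_{S^p[E]}\le C\sqrt p\,\|\varphi\|_{\besovect{p}{1/p}{E}}$ is not addressed at all. In the paper that estimate is obtained as the case $\alpha=\beta=0$ of Theorem~\ref{thm=thm_principal_Hankel_Besov}, whose proof takes up most of Section~3 (explicit endpoint bounds at $p=1$ and $p=\infty$, interpolation of regular operators for the upper bound, and a duality argument against $\Gamma_\psi^{1,1}$ for the lower bound). If your intent was to supply a full proof of the theorem, this is a genuine gap.

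For the optimality itself, your argument is essentially identical to the paper's (the subsection ``Optimality of the constants'', attributed to Ricard): the same lacunary test function $\varphi_p=\sum_{k\le n} z^{2^k}$ with $n=\lfloor p\rfloor$, the same Besov bound $\|\varphi_p\|_{\besov{p}{1/p}}\lesssim 1$ coming from the fact that each $W_m\conv\varphi_p$ is a single monomial, and the same lower bound strategy $\|\Gamma_{\varphi_p}\|_{S^p}\ge\|\Gamma_{\varphi_p}\|_{B(\ell^2)}$ combined with Nehari. The only difference is in the phrasing of the last step: the paper writes it as the dual of Paley's inequality, $\|\varphi_a\|_{(H^1)^*}\ge\tfrac13\|a\|_{\ell^2}$, while you pass through Fefferman's $(H^1)^*\simeq BMOA$ and the elementary bound $\|\varphi_p\|_{BMOA}\ge\|\varphi_p-\widehat\varphi_p(0)\|_{L^2}=\sqrt n$. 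These are two formulations of the same fact, and both deliver the required $\sqrt n\approx\sqrt p$.
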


As a consequence we also get that the norm of the natural projection onto
the space of Hankel matrices grows as $\sqrt p$ as $p\to  \infty$, and as
$1/\sqrt{p-1}$ as $p \to 1$:
\begin{thm}
\label{thm=projection}
Let $P_{Hank}$ be the natural projection from the space of infinite matrices to
the subspace of Hankel matrices:
  \[P_{Hank} \left((a_{j,k})_{j,k \geq 0}\right) = \left(\frac 1 {j+k+1} \sum_{s+t
      = j+k} a_{s,t}\right)_{j,k \geq 0}.\] 

  Then, for $1<p<\infty$, $P_{Hank}$ is bounded on $S^p$ (and on $S^p[E]$
  for any operator space $E$) and its norms satisfy the following
  inequality with a constant $C>0$ independant on $E$ and $p$:
  \[ C^{-1} \sqrt{\frac{p^2}{p-1}} \leq \|P_{Hank}\|_{S^p \to S^p} \leq
  \|P_{Hank}\|_{S^p[E] \to S^p[E]} \leq C \sqrt{\frac{p^2}{p-1}}.\]
\end{thm}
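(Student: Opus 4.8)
The plan is to prove the three inequalities separately, after two preliminary reductions. For the \emph{middle inequality}, note that a one-dimensional operator space is completely isometric to $\C$, so for any norm-one $e\in E$ the map $x\mapsto x\otimes e$ is an isometric embedding of $S^p=S^p[\C]$ into $S^p[E]$ that intertwines $P_{Hank}$ with $P_{Hank}\otimes\mathrm{id}_E$; evaluating $P_{Hank}\otimes\mathrm{id}_E$ on this copy of $S^p$ gives $\|P_{Hank}\|_{S^p\to S^p}\leq\|P_{Hank}\|_{S^p[E]\to S^p[E]}$ immediately. For the \emph{duality reduction}, a direct computation with the trace pairing shows that $\langle P_{Hank}(A),B\rangle=\sum_n\frac1{n+1}\big(\sum_{s+t=n}a_{s,t}\big)\big(\sum_{j+k=n}b_{j,k}\big)$ is symmetric in $A$ and $B$, so $P_{Hank}$ is self-adjoint and $\|P_{Hank}\|_{S^p[E]\to S^p[E]}=\|P_{Hank}\|_{S^{p'}[E^*]\to S^{p'}[E^*]}$ with $1/p+1/p'=1$. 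Since $\frac{p^2}{p-1}$ is invariant under $p\leftrightarrow p'$ and satisfies $p\leq\frac{p^2}{p-1}\leq 2p$ for $p\geq2$, it suffices to prove both bounds for $p\geq2$ with $\sqrt{p^2/(p-1)}$ replaced by $\sqrt p$.

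For the upper bound I would factor $P_{Hank}=J\circ T$, where $J\colon\varphi\mapsto\Gamma_\varphi$ and $T\colon A\mapsto\varphi_A$ sends a matrix to the analytic function with $\widehat{\varphi_A}(n)=\frac1{n+1}\sum_{s+t=n}a_{s,t}$, so that $\Gamma_{\varphi_A}=P_{Hank}(A)$ and $T$ restricts to $J^{-1}$ on Hankel matrices (confirming $P_{Hank}$ is a projection). Theorem~\ref{thm=Main_thm} gives $\|J\|\leq C\sqrt p$, so it remains to show that $T\colon S^p[E]\to\besovect{p}{1/p}{E}$ is bounded uniformly in $p\geq2$ and in $E$. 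Writing $\beta(z)=\sum_{s,t}a_{s,t}z^{s+t}$, the multiplier $n\mapsto 1/(n+1)$ is $\approx 2^{-k}$ and smooth on each dyadic block, so the Littlewood--Paley description of the Besov norm yields $\|\varphi_A\|_{\besovect{p}{1/p}{E}}\approx\|\beta\|_{B^{-1/p'}_{p}(E)}$, and the upper bound reduces to the estimate $\|\beta\|_{B^{-1/p'}_{p}(E)}\leq C'\|A\|_{S^p[E]}$ for the anti-diagonal-sum map $A\mapsto\beta$.

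This last estimate is the main obstacle. I expect it to be a single-block, Paley-type companion of Theorem~\ref{thm=Main_thm}, provable with uniform constants by the same Littlewood--Paley and Schatten techniques: one compares $\|\Delta_k\beta\|_{L^p(\tore;E)}$ with the $S^p[E]$-norm of the band of $A$ supported on the anti-diagonals $s+t\approx 2^k$, and then sums the contributions of the (essentially disjoint) bands, the weight $2^{-k/p'}$ matching exactly the loss in passing from a matrix band to its anti-diagonal sum. Combining $\|J\|\leq C\sqrt p$ with $\|T\|\leq C'$ then yields $\|P_{Hank}\|_{S^p[E]\to S^p[E]}\leq CC'\sqrt p$ for $p\geq2$, and the duality reduction delivers the full range.

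For the lower bound it suffices, again by duality, to produce for $p\geq2$ a matrix $A$ with $\|P_{Hank}(A)\|_{S^p}\geq c\sqrt p\,\|A\|_{S^p}$. I would start from the scalar extremizer $\varphi$ furnished by the last assertion of Theorem~\ref{thm=Main_thm}, for which $\|\Gamma_\varphi\|_{S^p}\geq c\sqrt p\,\|\varphi\|_{\besov{p}{1/p}}$, and lift it to a non-Hankel matrix $A$ with the prescribed anti-diagonal sums $\sum_{s+t=n}a_{s,t}=(n+1)\widehat\varphi(n)$, so that $P_{Hank}(A)=\Gamma_\varphi$ exactly. The crux here is to choose the lift efficiently, spreading the mass along each anti-diagonal so that $\|A\|_{S^p}\leq C\|\varphi\|_{\besov{p}{1/p}}$ rather than the far larger norm of the naive one-column lift; concretely I expect a thin banded lift adapted to the dyadic support of $\varphi$ to achieve this. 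Granting it, $\|P_{Hank}\|_{S^p\to S^p}\geq\|\Gamma_\varphi\|_{S^p}/\|A\|_{S^p}\geq c'\sqrt p$, and the duality reduction transfers this lower bound to $1<p<2$, producing the stated growth $1/\sqrt{p-1}$ as $p\to1$.
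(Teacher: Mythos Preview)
Your overall architecture matches the paper's: reduce by self-adjointness of $P_{Hank}$ to one range of $p$, then factor $P_{Hank}$ through the Besov space via the symbol-to-Hankel map $J\colon\varphi\mapsto\Gamma_\varphi$ and the anti-diagonal-sum map. The middle inequality and the duality reduction are fine.

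The gap is that both places where you write ``I expect'' are resolved by a single observation you do not make: your map $A\mapsto\beta$, $\beta(z)=\sum_{s,t}a_{s,t}z^{s+t}$, is \emph{exactly} the adjoint $J^*$ of $J\colon\besov{p'}{1/p'}\to S^{p'}$ for the natural dualities $(S^{p'})^*=S^p$ and $(\besov{p'}{1/p'})^*\simeq\besov{p}{-1/p'}$. Indeed $\langle\Gamma_\psi,A\rangle=\sum_{j,k}\widehat\psi(j+k)a_{j,k}=\sum_n\widehat\psi(n)\widehat\beta(n)$. Thus your ``main obstacle'' $\|A\mapsto\beta\|_{S^p[E]\to\besovect{p}{-1/p'}{E}}\le C'$ for $p\ge 2$ is just Theorem~\ref{thm=Main_thm} for $p'\le 2$ (where $\sqrt{p'}\le\sqrt 2$) together with $\|J^*\|=\|J\|$; no separate Littlewood--Paley argument is needed. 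With this in hand your factorization is $P_{Hank}=J\circ I_{-1}\circ J^*$ (the multiplier $I_{-1}$ being a uniformly bounded isomorphism $\besov{p}{-1/p'}\to\besov{p}{1/p}$), which is precisely the paper's diagram.

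The same observation closes your lower-bound gap without any explicit ``efficient lift''. From $P_{Hank}=J\circ I_{-1}\circ J^*$ one has $J^*=I_{1}\circ J^{-1}\circ P_{Hank}$, where $J^{-1}$ denotes the inverse on the range (Hankel matrices). By Theorem~\ref{thm=Main_thm} the norms of $J^{-1}$ and of $I_1$ are bounded uniformly in $p$, so $\|P_{Hank}\|_{S^p\to S^p}\ge c\,\|J^*\|_{S^p\to\besov{p}{-1/p'}}=c\,\|J\|_{\besov{p'}{1/p'}\to S^{p'}}\ge c'\sqrt{p'}$, the last inequality being the optimality clause of Theorem~\ref{thm=Main_thm}. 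Your proposed construction of a lift $A$ with $\|A\|_{S^p}\le C\|\varphi\|_{\besov{p}{1/p}}$ is in fact equivalent, by duality, to the lower bound $\|\Gamma_\psi\|_{S^{p'}}\ge C^{-1}\|\psi\|_{\besov{p'}{1/p'}}$, so trying to build it ``by hand'' with banded matrices is unnecessary.
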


As often for results on non-commutative $L^p$ spaces Theore
\ref{thm=Main_thm} is proved using the complex interpolation method. For
$p=1$ the above theorem can be proved directly. A first natural attempt to
derive the Theorem for any $p$ would be to get something for
$p=\infty$. Bounded Hankel operators are well-known with Nehari's theorem
and its operator valued version, which states that for $E\subset B(\ell^2)$
and $p=\infty$, $\Gamma_\varphi$ belongs to $B(\ell^2)\otimes E$ if and
only if there is a function $\psi \in L^\infty(\tore;B(\ell^2))$ such that
$\widehat \psi(k) = \widehat \varphi(k)$ for $k>0$. But for non-injective
operator spaces, this seems very complicated (at least to me) to relate
this function $\psi$ to properties of $E$. Another natural attempt would be
to interpolate between $p=2$ and $p=1$ since often for $p=2$ results are
obvious. But it should be pointed out that here the Theorem is non trivial
for $p=2$ as well. We are thus led to pass from a problem with only one
parameter $p$ to a problem with more parameters to ``get room'' in order to
be able to use the interpolation method. This is done with the so-called
generalized Hankel matrices.

For real (or complex) numbers $\alpha,\beta$ the generalized Hankel
matrix with symbol $\varphi$ is defined by
\[\Gamma_\varphi^{\alpha,\beta}
= \left((1+j)^\alpha (1+k)^\beta\widehat \varphi(j+k)\right)_{j,k \geq
  0}.\]

Our main theorem characterizes, for an operator space $E$ and a $1
\leq p \leq \infty$, the generalized Hankel matrices that belong to
$S^p[E]$ under the conditions that $\alpha+1/2p>0, \beta+1/2p>0$.
\begin{thm}
\label{thm=thm_principal_Hankel_Besov}
Let $1 \leq p \leq \infty$ and $\alpha, \beta > \nicefrac{-1}{2p}$. Then
for a formal series $\varphi = \sum_{n \geq 0} \widehat \varphi(n) z^n$
with $\widehat \varphi(n) \in E$, $\Gamma_\varphi^{\alpha,\beta} \in
S^p[E]$ if and only if $\varphi \in \besovect{p}{\nicefrac 1 p + \alpha +
  \beta}{E}$.

More precisely, for all $M>0$, there is a constant $C = C_M$ (depending
only on $M$, not on $p$, E) such that for all such $\varphi$, all $1 \leq p
\leq \infty$ and all $\alpha, \beta \in \R$ such that $\nicefrac{-1}{2p} <
\alpha,\beta <M$,
  \begin{equation}
\label{eq=inegalite_principale_Hankel_Besov}
C^{-1} \left\|\varphi\right\|_{\besovect{p}{1/p + \alpha +
    \beta}{E}}\leq \left\|\Gamma_\varphi^{\alpha,\beta} \right\|_{S^p[E]}
\leq \frac{C}{\sqrt{\min(\alpha, \beta)+\nicefrac 1 {2p}}^{1+1/p}}
\left\|\varphi\right\|_{\besovect{p}{1/p + \alpha + \beta}{E}}.
\end{equation}
\end{thm}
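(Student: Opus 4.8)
The plan is to prove the two inequalities in \eqref{eq=inegalite_principale_Hankel_Besov} separately, in each case reducing the estimate to a single frequency block by a Littlewood--Paley decomposition, and then to pass to arbitrary operator spaces $E$ and to the whole range $1\le p\le\infty$ by complex (Stein) interpolation, using the weight exponents $\alpha,\beta$ as the extra room alluded to above.

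First I would fix a smooth dyadic resolution adapted to the analytic setting and write $\varphi=\sum_{n\ge 0}\varphi_n$ with $\varphi_n$ analytic of spectrum in $[2^{n-1},2^{n+1}]$, so that by the very definition of $\besovect{p}{s}{E}$ one has $\|\varphi\|_{\besovect{p}{s}{E}}\equival(\sum_n 2^{nsp}\|\varphi_n\|_{L^p(\tore;E)}^p)^{1/p}$. Since $\Gamma_\varphi^{\alpha,\beta}=\sum_n\Gamma_{\varphi_n}^{\alpha,\beta}$ and the nonzero entries of $\Gamma_{\varphi_n}^{\alpha,\beta}$ sit on the anti-diagonal band $\{j+k\in[2^{n-1},2^{n+1})\}$, consecutive bands overlap boundedly; after splitting the index $n$ into finitely many subsequences along which the bands are disjoint, $\Gamma_\varphi^{\alpha,\beta}$ becomes, up to a coordinate flip, a genuine direct sum and one expects $\|\Gamma_\varphi^{\alpha,\beta}\|_{S^p[E]}\equival(\sum_n\|\Gamma_{\varphi_n}^{\alpha,\beta}\|_{S^p[E]}^p)^{1/p}$. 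Granting this, the theorem reduces to the single-block estimate $\|\Gamma_{\varphi_n}^{\alpha,\beta}\|_{S^p[E]}\equival 2^{n(\alpha+\beta+1/p)}\|\varphi_n\|_{L^p(\tore;E)}$ with the lower bound uniform and the upper bound carrying the constant $(\min(\alpha,\beta)+1/(2p))^{-(1+1/p)/2}$; summing the $p$-th powers over $n$ then yields the Besov index $1/p+\alpha+\beta$.

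For the block estimate I would write $\Gamma_{\varphi_n}^{\alpha,\beta}=D^\alpha\,\Gamma_{\varphi_n}\,D^\beta$ with $D=\mathrm{diag}(1+j)_j$ and measure the diagonal weights not in operator norm but in $S^{2p}$: restricted to the block, $\|D^\alpha\|_{S^{2p}}\equival 2^{n(\alpha+1/(2p))}\,(2p(\alpha+1/(2p)))^{-1/(2p)}$, which is finite exactly when $\alpha>-1/(2p)$ and accounts both for the threshold and for the factor $(\alpha+1/(2p))^{-1/(2p)}$ in the constant. The Schatten Hölder factorization $S^p=S^{2p}\cdot S^\infty\cdot S^{2p}$ then bounds the block by $\|D^\alpha\|_{S^{2p}}\,\|\Gamma_{\varphi_n}\|_{S^\infty[E]}\,\|D^\beta\|_{S^{2p}}$; the remaining half-power $(\min(\alpha,\beta)+1/(2p))^{-1/2}$ and the replacement of this crude operator-norm bound by the correct $\|\varphi_n\|_{L^p(\tore;E)}$ I would obtain by interpolation: set up an analytic family $T_z$ on the strip $\{0\le\mathrm{Re}\,z\le1\}$ in which $z$ drives $1/p$ and the real parts of $\alpha,\beta$ linearly while their imaginary parts keep $D^{\alpha(z)},D^{\beta(z)}$ unitary (hence isometric on each $S^q[E]$) on the two boundary lines. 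Pisier's identity $S^p[E]=[S^\infty[E],S^1[E]]_{1/p}$ and the interpolation of the Besov scale in the smoothness index make the two boundary estimates the tractable endpoints --- in particular the awkward $p=\infty$/Nehari endpoint is never used at fixed $(\alpha,\beta)$ --- and the three-lines lemma delivers the interior estimate. Specialising $\alpha=\beta=0$ recovers the $\sqrt p$ growth of Theorem~\ref{thm=Main_thm}, since then $\min(\alpha,\beta)+1/(2p)=1/(2p)$.

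The step I expect to be the main obstacle is making the block near-orthogonality of the second paragraph quantitative and uniform: in $S^p[E]$ one cannot invoke scalar almost-orthogonality, so the bounded overlap of the anti-diagonal bands has to be combined with the row/column structure of $S^p[E]$, using the triangle inequality in one direction and an interpolation or Schur-multiplier argument in the other, while keeping the constants independent of $E$ and, crucially, of $p$ --- any spurious $p$-dependence here would destroy the sharp behaviour in $\sqrt{\min(\alpha,\beta)+1/(2p)}$ recorded in \eqref{eq=inegalite_principale_Hankel_Besov}.
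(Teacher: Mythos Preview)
Your proposal has a genuine gap in its central reduction step. The anti-diagonal bands $\{j+k\in[2^{n-1},2^{n+1})\}$ of a Hankel matrix are not, even after a coordinate flip and after thinning to a lacunary subsequence in $n$, block-diagonal: every row and every column of the infinite array meets all but finitely many of the bands, so $\Gamma_\varphi^{\alpha,\beta}$ is never a genuine direct sum of the $\Gamma_{\varphi_n}^{\alpha,\beta}$. Worse, the equivalence $\|\Gamma_\varphi\|_{S^p}^p\equival\sum_n\|\Gamma_{\varphi_n}\|_{S^p}^p$ that you need fails with constants blowing up in $p$, and the paper's own optimality computation furnishes the counterexample. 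Take $\alpha=\beta=0$ and $\varphi=\sum_{k=0}^n z^{2^k}$ with $n\sim p$: each $\Gamma_{\varphi_k}$ is a single anti-diagonal of $2^k+1$ ones, hence $\|\Gamma_{\varphi_k}\|_{S^p}=(2^k+1)^{1/p}$ and $\bigl(\sum_k\|\Gamma_{\varphi_k}\|_{S^p}^p\bigr)^{1/p}\equival 2^{n/p}\equival 1$; yet Paley's inequality gives $\|\Gamma_\varphi\|_{S^p}\ge\|\Gamma_\varphi\|_{B(\ell^2)}\ge\sqrt{n+1}/3\equival\sqrt p$. Thus the assembly constant in the direction you need for the \emph{upper} bound is already of order $\sqrt p$, and since your single-block step is meant to supply the further factor $(\min(\alpha,\beta)+1/2p)^{-1/2}\sim\sqrt p$, the scheme yields at best order $p$, not $\sqrt p$. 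No row/column or Schur-multiplier patch repairs this uniformly: disjoint anti-diagonal bands simply do not decouple in $S^p$.

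The paper avoids this by never cutting $\Gamma_\varphi^{\alpha,\beta}$ into anti-diagonal pieces. For the upper bound it bounds the \emph{entire} map $\varphi\mapsto D^{1/2p}\Gamma_\varphi^{\widetilde\alpha,\widetilde\beta}D^{1/2p}$ at the two endpoints $p=1$ and $p=\infty$ and then runs Stein interpolation in the weight exponents (equivalently, Pisier's interpolation of regular operators); the Littlewood--Paley decomposition is used only inside the $p=1$ endpoint, where the triangle inequality is harmless, while the $p=\infty$ endpoint is handled not by Nehari but by the product estimate $\besovect{2}{-\beta}{H}\cdot\besovect{2}{-\alpha}{\overline H}\hookrightarrow\besovect{1}{-\alpha-\beta}{H\hat\otimes\overline H}$ coming from the analytic-disc description of Besov spaces. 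For the lower bound the paper does not argue blockwise at all: it transposes the upper bound at $\alpha=\beta=1$, exponent $p'$, and uses the convolution identity $\sum_{j+k=n}D_j^{\alpha+1}D_k^{\beta+1}=D_n^{\alpha+\beta+3}$ to recognise the adjoint as $\widetilde I_{\alpha+\beta+3}$ on symbols, which is an isomorphism between the relevant Besov spaces. Your interpolation instinct is therefore the right one, but it has to be applied to the global operator $\varphi\mapsto\Gamma_\varphi^{\alpha,\beta}$, not to a block model.
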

The usual convention is to define $S^\infty[E]$ as $\mathcal K
\otimes_{min} E$. However in the previous Theorem one has to (abusively)
understand $\|\cdot\|_{S^\infty[E]}$ as $\|\cdot\|_{B(\ell^2) \otimes_{min}
E}$ (if $E$ is finite dimensional) or even as $\|\cdot\|_{B(\ell^2
\otimes H)}$ if $E \subset B(H)$.

Note that surprisingly, this theorem shows that the condition
$\Gamma_\varphi^{\alpha,\beta} \in S^p[E]$ only depends on the
Banach space structure of $E$ (whereas the Banach space structure of
$S^p[E]$ depends on the operator space structure of $E$). 

These results extend results of Peller in the scalar case or in the case
when $E=S^p$ (\cite{MR602274},\cite{MR647702},\cite{MR725454},
\cite{MR1949210}). In the scalar case Peller's theorem indeed shows that
the space of Hankel matrices in $S^p$ is isomorphic to a Besov space
$\besov{p}{1/p}$. The case when $E=S^p$ shows that this isomorphism is in
fact a complete isomorphism. The results stated above show that this
isomorphism has the stronger property of being \emph{regular} as well as
its inverse in the sense of \cite{MR1324838}. In this paper the choice was
made to use the vocabulary of regular operators, but one could easily avoid
this notion (replacing, in the proof of Lemma
\ref{thm=application_reguliere}, the use of Pisier's Theorem
\ref{thm=applications_regulieres_NC} by Stein's interpolation method). The
natural projection $P_{Hank}$ was also studied by Peller (Chapter 6 of
\cite{MR1949210}) who proved that it is bounded on $S^p$ if $1<p<\infty$
and unbounded if $p=1$ or $\infty$. Here we prove that it is even regular,
and show that its norm as well as its regular norm behaves as $\sqrt p$
($p\geq 2$) or as $1/\sqrt{p-1}$ ($p \geq 2$). This seems to be new even in
the scalar case.

These results should be considered as remarks on Peller's proof rather than
new theorems, since the steps presented here are all close to one of
Peller's proofs (\cite{MR1949210}, sections 8 and 9 of Chapter 6). There
are still some adaptations to make since for example the result for $p=2$
is non-trivial here whereas it is obvious in Peller's case. Moreover as far
as the constants in the isomorphisms are concerned, our results are more
precise and optimal in some sense (if one follows Peller's proofs, one is
led to constants growing at least as fast as $p$ in the right-hand side of
the inequality of the Theorem \ref{thm=Main_thm}). For completeness we
provide a detailed proof. We would also like to mention here the fact that
{\'E}ric Ricard has found a much shorter and elementary proof of Theorem
\ref{thm=Main_thm} (which is in particular a new simpler proof of Peller's
results), but it leads to constants of order $p$ instead of $\sqrt p$. It
is also worth mentioning that (at least one direction of) his proof also
works for $p<1$ (in the scalar and $S^p$-valued case).

Peller's classical results also have an extension to the case $0<p<1$. Here
there are some obstructions: we should first of all clarify the notion of
vector-valued non-commutative $L^p$ spaces for $p<1$. But even then, since
the proof given here really lies on duality and interpolation, some new
ideas would be needed.

This chapter is organized as follows: in the first section we recall
briefly definitions and facts on regular operators. In the second section
we give definitions and classical results on Besov spaces of analytic
functions $\besov{p,q}{s}$ that will be used later. All results are
proved. In the third and last section we prove the main result.

\subsection*{Notation} We will use the following notation: if $X$ and $Y$
are two Banach spaces (resp. operator spaces), we write $X\simeq Y$ if $X$
and $Y$ are isomorphic (resp. completely isomorphic). Most of the time the
isomorphism will not be explicited since it is natural.  If $A$ and $B$ are
two nonnegative numerical expressions (depending on some parameters), we
will write $A \equival B$ if there is a constant $c$ such that $c^{-1}
A\leq B \leq c A$.
\section{Background on regular operators}
\label{part=regular_operators}
\subsection{Commutative case}
We start by recalling the definition of regular operators in the
commutative setting.
\begin{dfn} 
A linear operator $u:\Lambda_1 \to \Lambda_2$ between Banach lattices
is said to be regular if for any Banach space $X$, $u \otimes id_X :
\Lambda_1(X) \to \Lambda_1(X)$ is bounded. Equivalently (taking for
$X=\ell^\infty_n$), if there is a constant $C$ such that for any $n$
and $f_1,\dots, f_n \in \Lambda_1$,
\[\left\|\sup_k |u(f_k)|\right\|_{\Lambda_2} \leq C
  \left\|\sup_k |f_k|\right\|_{\Lambda_1}.\]

The smallest such $C$ is denoted by $\|u\|_r$.
\end{dfn}
This theory applies in particular if $\Lambda_1$ and $\Lambda_1$ are
(commutative) $L^p$ spaces: when $p=1$ or $p = \infty$ a map is regular if
and only if it is bounded. Similarly, a map that is simultaneously bounded
$L^1 \to L^1$ and $L^\infty \to L^\infty$ is regular on $L^p$. This is not
far from being a characterization since it is known that the set of regular
operators: $L^p \to L^p$ coincides with the interpolation space (for the
second complex interpolation method) between $B(L^\infty,L^\infty)$ and
$B(L^1,L^1)$.

We refer to \cite{MR0482275} for facts on the complex interpolation
method.

\subsection{Non-commutative case}
Let $S$ be a subspace of a non-commutative $L^p$ space constructed on a
hyperfinite von Neumann algebra. In the sequel for an operator space $E$ we
will denote by $S[E]$ the (closure of) the subspace $S \otimes E$ of the
vector valued non-commutative $L^p$-space $L^p(\tau;E)$ defined in
\cite{MR1648908}.
\begin{dfn} 
A linear map $u:S \to T$ between subspaces of non-commutative $L^p$
spaces as above is said to be regular if for any operator space $E$,
$u \otimes id_E : S[E] \to T[E]$ is bounded. As in the commutative
case $\|u\|_r$ will denote the best constant $C$ such that $\|u
\otimes id_E\|_{S[E] \to T[E]}\leq C$ for all $E$.

The set of regular operators equipped with this norm will be denoted
by $B_r(S,T)$.
\end{dfn}
 Since classical $L^p$ spaces are special cases of non-commutative
 $L^p$ spaces, this notion applies also for commutative $L^p$ spaces
 (but fortunately the two notions coincide).  This notion was defined
 and studied in \cite{MR1324838}. In particular the following result
 was proved:
\begin{thm}[Pisier] 
\label{thm=applications_regulieres_NC}
Let $(\mathcal M,\tau)$ and $(\mathcal N, \widetilde \tau)$ be hyperfinite
von Neumann algebras with normal semi-finite faithful traces. Then a map
$u:L^p(\tau) \to L^p (\widetilde \tau)$ is regular is and only if it is a
linear combination of bounded completely positive operators. Moreover
isomorphically (with constant not depending on $p$ or on $\mathcal
M,\mathcal N$)
\[ B_r(L^p,L^p) \simeq \left[CB(L^\infty,L^\infty),
  CB(L^1,L^1)\right]^\theta \textrm{ for } \theta = 1/p.
\]
\end{thm}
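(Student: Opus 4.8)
The plan is to establish the two halves of the statement together — the algebraic description of regular operators as linear combinations of completely positive maps, and the interpolation formula — using the principle that both reduce to the behaviour at the endpoints $p=\infty$ and $p=1$, where regularity coincides with complete boundedness.

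First I would treat the endpoints. At $p=\infty$, unwinding the definition, $u$ is regular on $L^\infty=\mathcal M$ exactly when $u\otimes id_E$ is bounded on $\mathcal M\otimes_{min}E$ for every operator space $E$; taking $E=M_n$ this is precisely the definition of complete boundedness, so $B_r(L^\infty,L^\infty)=CB(L^\infty,L^\infty)$ isometrically, and dually $B_r(L^1,L^1)=CB(L^1,L^1)$. Here the hyperfiniteness, hence injectivity, of $\mathcal M,\mathcal N$ enters decisively: by the Wittstock--Haagerup decomposition a completely bounded map into an injective von Neumann algebra is a linear combination $u_1-u_2+i(u_3-u_4)$ of completely positive maps, with $\|u\|_{cb}$ comparable to $\inf\max_j\|u_j\|$. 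This yields the characterisation as a combination of completely positive operators at the endpoints.

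Next I would establish the easy inclusion of the interpolation formula. Given a bounded holomorphic family $F$ on the strip with $F|_{\Re z=0}$ valued in $CB(L^\infty,L^\infty)$ and $F|_{\Re z=1}$ in $CB(L^1,L^1)$, I want $F(1/p)\otimes id_E$ bounded on $L^p(\tau;E)$. Since Pisier's vector-valued spaces satisfy $L^p(\tau;E)=[L^\infty(\tau;E),L^1(\tau;E)]_{1/p}$ with $E$ held fixed, this is exactly Stein's interpolation theorem applied to $z\mapsto F(z)\otimes id_E$: boundedness at the endpoints, uniform in $E$, forces boundedness at $1/p$. Taking the infimum over such families shows the interpolation norm dominates $\|u\|_r$.

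The converse inequality — that every regular operator on $L^p$ is the value at $1/p$ of such a holomorphic family, with controlled norm — is the crux and the main obstacle. The difficulty is that regularity is defined by a supremum over all $E$ at the single exponent $p$, and one must reconstruct from this data a family that is completely bounded at the \emph{endpoints}. I would proceed by duality: identify $B_r(L^p,L^p)$ with the dual of an explicit vector-valued tensor product so that the regular norm is computed against a single testing functional, and then use the hyperfinite structure to approximate by finite-dimensional matrix algebras, where a Hahn--Banach separation in the cone of completely positive maps produces a $2\times2$ completely positive domination $\left(\begin{smallmatrix} a & u\\ u^* & b\end{smallmatrix}\right)$. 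Applying complex powers to the dominating maps $a,b$ then manufactures the holomorphic family, placing their roots at the endpoints. The delicate point throughout is quantitative: ensuring the separation, the passage to the limit, and the complex-power construction all proceed with constants independent of $p$ and of $\mathcal M,\mathcal N$, which is the precise content of the claimed isomorphism. With both inequalities in hand the interpolation formula holds isomorphically, and the characterisation of regular operators as linear combinations of completely positive maps for arbitrary $p$ follows, since by the endpoint description the boundary data of the family may be taken completely positive.
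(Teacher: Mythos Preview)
The paper does not prove this theorem at all: it is stated as a known result of Pisier, with the proof deferred to the reference \cite{MR1324838}. So there is no ``paper's own proof'' to compare against; the theorem is used as a black box in the proof of Lemma~\ref{thm=application_reguliere}.

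As for your sketch itself: the broad architecture is right --- the endpoints are where regularity and complete boundedness coincide, the easy direction is Stein interpolation applied to $F(z)\otimes id_E$, and the hard direction requires producing a holomorphic family from a single regular map. But your treatment of the hard direction is too vague to count as a proof. The phrase ``Hahn--Banach separation in the cone of completely positive maps produces a $2\times 2$ completely positive domination'' hides the entire content: one must actually identify the predual of $B_r(L^p,L^p)$, prove a factorisation theorem for its elements, and then show that the resulting positive dominating maps can be analytically continued via complex powers in a way that lands in $CB$ at the boundary with controlled norms. None of these steps is routine, and your sketch does not indicate how to carry them out. If you want to supply a genuine proof you should consult Pisier's original paper; otherwise, do as the present paper does and cite the result.
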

We will only apply this fact in the case of von Neumann algebras that are
either commutative or equal $B(\ell^2)$ equipped with the usual trace. The
following result was also proved:
\begin{thm}
\label{thm=regularity_au_dual}
Let $1\leq p<\infty$. Then $u:L^p(\tau) \to L^p(\widetilde \tau)$ is
regular if and only if $u^*:L^{p'}(\widetilde \tau) \to L^{p'}(\tau)$ is
regular, and $\|u\|_r = \|u^*\|_r$.
\end{thm}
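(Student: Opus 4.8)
The plan is to derive the identity $\|u\|_r = \|u^*\|_r$ directly from the definition of the regular norm as a supremum of ordinary operator norms, combining it with Pisier's duality theory for the spaces $L^p(\tau;E)$ and the elementary fact that a bounded operator between Banach spaces and its Banach-space adjoint have the same norm. The whole point is that dualizing $u$ dualizes both the underlying $L^p$ and the coefficient operator space $E$, and the class of operator spaces probed by the definition is stable under this double dualization.

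First I would reduce the supremum defining $\|u\|_r$ to finite-dimensional operator spaces. Any element of the algebraic tensor product $L^p(\tau) \otimes E$ lies in $L^p(\tau) \otimes E_0$ for some finite-dimensional subspace $E_0 \subset E$, and since the inclusion $E_0 \hookrightarrow E$ is completely isometric it induces a complete isometry $L^p(\tau;E_0) \hookrightarrow L^p(\tau;E)$; as the algebraic tensor product is norm-dense (for $p<\infty$, and also in the $\min$-tensor description used for $p=\infty$), computing $\|u \otimes id_E\|$ reduces to computing it on finite-dimensional $E_0$. Hence
\[\|u\|_r = \sup\left\{\|u \otimes id_E\|_{L^p(\tau;E) \to L^p(\widetilde\tau;E)} : E \text{ finite-dimensional}\right\},\]
and the same reduction applies to $u^*$.

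Next I would invoke the duality $L^p(\tau;E)^* = L^{p'}(\tau;E^*)$, valid for $1 \le p < \infty$ and $E$ finite-dimensional (so that $E^{**}=E$ completely isometrically), under the pairing obtained by combining the trace with the canonical $E$–$E^*$ duality. On elementary tensors a direct computation gives $\scal{(u \otimes id_E)(a\otimes \xi)}{b \otimes \eta} = \scal{a\otimes \xi}{(u^* \otimes id_{E^*})(b \otimes \eta)}$, so that the adjoint of $u \otimes id_E : L^p(\tau;E) \to L^p(\widetilde\tau;E)$ is exactly $u^* \otimes id_{E^*} : L^{p'}(\widetilde\tau;E^*) \to L^{p'}(\tau;E^*)$. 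Since $\|T\| = \|T^*\|$, this yields $\|u \otimes id_E\| = \|u^* \otimes id_{E^*}\|$ for every finite-dimensional $E$. Taking the supremum over all finite-dimensional $E$ and using that $E \mapsto E^*$ is an involutive bijection of the class of finite-dimensional operator spaces (every finite-dimensional $F$ equals $(F^*)^*$), the right-hand side becomes $\sup_F \|u^* \otimes id_F\| = \|u^*\|_r$, whence $\|u\|_r = \|u^*\|_r$; in particular one side is finite precisely when the other is.

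I expect the main obstacle to be purely the bookkeeping of Pisier's duality rather than any genuinely hard estimate: one must verify that the trace pairing identifies $L^p(\tau;E)^*$ isometrically with $L^{p'}(\tau;E^*)$ in the operator-space-valued setting and that under this identification $(u \otimes id_E)^*$ is $u^* \otimes id_{E^*}$. The endpoint $p=1$ (so $p'=\infty$) requires reading $L^\infty(\tau;E^*)$ in Pisier's sense and justifying the finite-dimensional reduction there through the $\min$-tensor description of $S^\infty[E]$, but once everything is restricted to finite-dimensional $E$ all the identifications are isometric and the argument closes uniformly in $p$.
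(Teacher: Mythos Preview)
The paper does not give its own proof of this statement; it is quoted in the background section as a result from Pisier's paper \cite{MR1324838} (``The following result was also proved''), with no argument supplied. So there is nothing in the paper to compare your proposal against.

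Your argument is correct and is essentially the standard direct proof. The reduction of $\|u\|_r$ to a supremum over finite-dimensional $E$ (using density of the algebraic tensor product for $p<\infty$ together with the fact that a completely isometric inclusion $E_0\hookrightarrow E$ induces an isometric inclusion $L^p(\tau;E_0)\hookrightarrow L^p(\tau;E)$), Pisier's duality $L^p(\tau;E)^* = L^{p'}(\tau;E^*)$ for finite-dimensional $E$, the identification $(u\otimes id_E)^* = u^*\otimes id_{E^*}$ on elementary tensors, and the involution $E\mapsto E^*$ on finite-dimensional operator spaces combine exactly as you describe to give $\|u\|_r=\|u^*\|_r$. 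The only genuinely delicate point is the one you already isolate: at $p=1$ (so $p'=\infty$) one must check that the finite-dimensional reduction also holds on the $L^\infty$ side, i.e.\ that $\sup_{F}\|u^*\otimes id_F\|$ over all $F$ equals the supremum over finite-dimensional $F$. This comes down to reading $L^\infty(\tau;F)$ as a $\min$-tensor closure so that the algebraic tensor product is dense and inclusions of coefficient spaces remain isometric; with Pisier's conventions this is indeed the case, and the argument closes.
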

\section{Vector valued Besov spaces}
\label{part=Espaces_de_Besov}
In this section we introduce the Besov spaces of analytic functions
$\besov{p,q}{s}$. Before that we need some facts on Fourier
multipliers. Everything in this section is classical (the results are
stated in \cite{MR1949210}, and they are proved for the real line
instead of the unit circle in \cite{MR0482275}), but we give precise
proofs in order to get quantitative bounds on the norms of the
different isomorphisms.

\subsection{Fourier Multipliers on the circle}
Here $\tore$ will denote the unit circle: $\tore=\{z \in \C, |z|=1\}$
and will be equipped with its Haar probability measure.

The Fourier multiplier with symbol $(\lambda_k)_{k \in \Z}$
($\lambda_k \in \C$) is the linear map on the polynomials in $z$ and
$\overline z$ denoted by $M_{(\lambda_k)_k}$ and mapping $\sum_{k \in
  \Z} a_k z^k$ to $\sum_{k \in \Z} \lambda_k a_k z^k$. For $1 \leq p
\leq \infty$ we say that the Fourier multiplier is bounded on $L^p$ if
the map $M_{(\lambda_k)_k}$ can be extended to a bounded operator on
$L^p(\tore)$ such that for $f \in L^p(\tore)$,
$g=M_{(\lambda_k)_k}(f)$ satisfies $\widehat g(k) = \lambda_k \widehat
f(k)$.

Similarly if $X$ is a Banach space the multiplier $M_{(\lambda_k)_k}$
is said to be bounded on $L^p(\tore;X)$ if $M_{(\lambda_k)_k} \otimes
id_X$ extends to a continuous map on $L^p(\tore;X)$ (which we still
denote by $M_{(\lambda)_k)}$), such that for $f \in L^p(\tore;X)$,
 $g=(M_{(\lambda_k)_k}\otimes id_X)(f)$ satisfies $\widehat g(k) =
\lambda_k \widehat f(k)$.

In the vocabulary of part \ref{part=regular_operators} a multiplier
$M_{(\lambda_k)_k}$ is said to be regular on $L^p$ if it is bounded on
$L^p(\tore;X)$ for any Banach space $X$.

For example if $\lambda_k = \widehat \mu(k)$ for some complex Borel
measure $\mu$ on $\tore$ then $M_{(\lambda_k)_k}$ is bounded on
$L^p(\tore;X)$ ($1\leq p \leq \infty$) for any Banach space $X$ since
it corresponds to the convolution map $f \mapsto \mu \conv f$. Its
regular norm on $L^p$ is therefore equal to the total variation of
$\mu$.

The following Lemma will be essential. 
\begin{lemma}
\label{thm=multiplicateurs_fourier}
Let $\lambda = (\lambda_k)_{k \in \Z} \in \C^\Z$ satisfying $\|
\lambda \|_2<\infty$. Then the Fourier multiplier with symbol
$\lambda$ is bounded on every $L^p$ and
\[ \left\|M_{(\lambda_k)_k}\right\|_{L^p \to L^p} \leq \frac{2}{\sqrt
  \pi} \sqrt{\|\lambda \|_2 \| (\lambda_{k+1} - \lambda_k)_k\|_2}.\]
It is even regular and its regular norm on $L^p$ is less than 
\[2/\sqrt
\pi \sqrt{\|\lambda \|_2 \| (\lambda_{k+1} - \lambda_k)_k\|_2}.\]
\end{lemma}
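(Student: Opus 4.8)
The plan is to realise the multiplier as a convolution operator and thereby reduce the whole statement to a single scalar $L^1$-estimate. Since $\|\lambda\|_2<\infty$, the series $g=\sum_{k\in\Z}\lambda_k z^k$ converges in $L^2(\tore)$ and defines a function $g\in L^2(\tore)\subset L^1(\tore)$ with $\widehat g(k)=\lambda_k$. On trigonometric polynomials one checks immediately that $M_{(\lambda_k)_k}$ is the convolution $f\mapsto g\conv f$, because $g\conv z^k=\widehat g(k)z^k=\lambda_k z^k$. Hence, exactly as in the remark preceding the Lemma (applied to the absolutely continuous measure with density $g$, whose total variation is $\|g\|_{L^1}$), the operator $M_{(\lambda_k)_k}$ extends to a regular operator on every $L^p$ with
\[\left\|M_{(\lambda_k)_k}\right\|_{L^p\to L^p}\le\left\|M_{(\lambda_k)_k}\right\|_r\le\|g\|_{L^1(\tore)}.\]
So everything reduces to proving $\|g\|_{L^1}\le\tfrac{2}{\sqrt\pi}\sqrt{\|\lambda\|_2\,\|(\lambda_{k+1}-\lambda_k)_k\|_2}$.

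Next I would rewrite the second factor as an $L^2$-norm of $g$ against an explicit weight. Writing $z=e^{i\theta}$ and using $\sum_k(\lambda_{k+1}-\lambda_k)z^k=(z^{-1}-1)g$, Parseval's identity gives
\[\left\|(\lambda_{k+1}-\lambda_k)_k\right\|_2=\left\|(z^{-1}-1)g\right\|_{L^2}=\left\|\,2\sin(\theta/2)\,g\,\right\|_{L^2},\]
since $|e^{-i\theta}-1|=2|\sin(\theta/2)|$. Setting $A=\|\lambda\|_2=\|g\|_{L^2}$ and $B=\|2\sin(\theta/2)\,g\|_{L^2}$, the target becomes $\|g\|_{L^1}\le\tfrac{2}{\sqrt\pi}\sqrt{AB}$.

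The estimate itself I would obtain from a weighted Cauchy--Schwarz inequality. For any $t>0$,
\[\|g\|_{L^1}\le\left(\int_{-\pi}^{\pi}\frac{1}{1+4t^2\sin^2(\theta/2)}\frac{d\theta}{2\pi}\right)^{1/2}\left(\int_{-\pi}^{\pi}\bigl(1+4t^2\sin^2(\theta/2)\bigr)|g(e^{i\theta})|^2\frac{d\theta}{2\pi}\right)^{1/2}.\]
The second factor is exactly $\sqrt{A^2+t^2B^2}$, and the first integral can be computed in closed form: the substitution $\phi=\theta/2$ together with the classical formula $\int_0^{\pi/2}\frac{d\phi}{a\cos^2\phi+b\sin^2\phi}=\frac{\pi}{2\sqrt{ab}}$ (with $a=1$, $b=1+4t^2$) yields $\int_{-\pi}^{\pi}\frac{1}{1+4t^2\sin^2(\theta/2)}\frac{d\theta}{2\pi}=(1+4t^2)^{-1/2}$. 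Thus $\|g\|_{L^1}\le(1+4t^2)^{-1/4}\sqrt{A^2+t^2B^2}$ for every $t>0$, and I would optimise the right-hand side over $t$. One finds $\|g\|_{L^1}\le\sqrt{AB}$ whenever $B^2\le 2A^2$ (already better than the claimed constant), while the remaining range $B^2>2A^2$ is covered by the choice $t=0$, using the a priori bound $B\le 2A$; in both cases the uniform bound $\tfrac{2}{\sqrt\pi}\sqrt{AB}$ holds.

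The conceptual content sits entirely in the first reduction, so the two points that require care are, first, justifying rigorously that the formally defined multiplier really coincides with convolution by $g$ on all of $L^p(\tore;X)$ — this uses density of trigonometric polynomials together with Young's inequality $\|g\conv f\|_{L^p(X)}\le\|g\|_{L^1}\|f\|_{L^p(X)}$, which also gives the regular bound — and second, extracting the exact constant from the one-parameter minimisation. I expect this optimisation to be the only genuinely computational obstacle; in fact the method appears to give a constant slightly smaller than $\tfrac{2}{\sqrt\pi}$, so the stated inequality has some room to spare.
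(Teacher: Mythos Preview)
Your reduction to an $L^1$ bound for the convolution kernel is exactly the paper's first step. Where you diverge is in proving $\|g\|_{L^1}\le\tfrac{2}{\sqrt\pi}\sqrt{AB}$. The paper splits $\tore$ into a short arc of half-length $s$ around $z=1$ and its complement, applies Cauchy--Schwarz separately on each piece (with weight $1$ on the arc and $|1-z|^{-1}$ on the complement), evaluates $\int_s^{1-s}|1-e^{2i\pi t}|^{-2}\,dt=\tfrac{1}{2\pi\tan(\pi s)}\le\tfrac{1}{2\pi^2 s}$, and then optimises over $s$. Your single weighted Cauchy--Schwarz with the smooth weight $(1+4t^2\sin^2(\theta/2))^{-1}$ is a genuine alternative: it avoids the domain splitting, the weight integral has a pleasant closed form, and the optimisation in $t$ even yields the sharper bound $\|g\|_{L^1}\le\sqrt{AB}$ in the main regime $B^2\le 2A^2$. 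One cosmetic point: in the complementary range $B^2>2A^2$ it is the \emph{lower} bound $B>\sqrt2\,A$ from the case hypothesis, not the a priori upper bound $B\le 2A$, that turns the $t=0$ estimate $\|g\|_{L^1}\le A$ into $A\le\tfrac{2}{\sqrt\pi}\sqrt{AB}$; the conclusion is unaffected.
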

\begin{proof}
  Since $\| (\lambda_k) \|_2<\infty$, the function $f:z \mapsto
  \sum_{k \in \Z} \lambda_k z^k$ is in $L^2$ and $\|f\|_2 = \|
  (\lambda_k) \|_2$. Similarly, the function $g:z \mapsto (1-z)f(z)$
  satisfies $\|g\|_2 = \|(\lambda_k - \lambda_{k+1})_{k \in \Z}\|_2$.

  Since the multiplier with symbol $(\lambda_k)$ corresponds to the
  convolution by $f$, by the remark preceding the Lemma we only have to
  prove that $\|f\|_1^2 \lesssim \| f\|_2 \|g\|_2$. But for any $0<s<1/2$:
\begin{eqnarray*}
  \|f\|_1 & = & \int_{0}^{1} |f(e^{2i\pi t})| d t\\
  & =&\int_{-s}^s |f(e^{2i\pi t})| d t + \int_{s}^{1-s} \frac{1}{|1-e^{2i\pi
      t}|} |(1-e^{2i\pi
    t})f(e^{2i\pi t})| d t\\
  & \leq & \sqrt{2s} \|f\|_2 + \sqrt{\int_s^{1-s} \frac{1}{|1-e^{2i\pi
        t}|^2} d t} \|g\|_2
\end{eqnarray*}
by the Cauchy-Schwarz inequality. The remaining integral can be computed:
\begin{eqnarray*}
  \int_s^{1-s} \frac{1}{|1-e^{2i\pi
      t}|^2} d t & = &2 \int_s^{1/2} \frac{1}{4\sin^2(\pi t)} dt\\
  & = & \frac{1}{2}\left[\frac{-\cos(\pi t)}{\pi \sin(\pi
      t)}\right]_s^{1/2} = \frac{1}{2 \pi \tan(\pi s)} \leq
  \frac{1}{2\pi^2 s}
\end{eqnarray*}
where we used that $\tan x \geq x$ for all $0\leq x \leq \pi/2$.
Taking $s= \|g\|_2/ 2\pi\|f\|_2 \leq 1/2$ we get the desired inequality.
\end{proof}
The following consequence will be also used a lot:
\begin{lemma}
\label{thm=multiplicateurs_de_Fourier_restreints}
  Let $I=[a,b] \subset \Z$ be an interval of size $N$ and take
  $(\lambda_k)_{k \in \Z}\in \C^\Z$.

  Then for any $1\leq p \leq \infty$, any Banach space $X$ and any $f \in
  L^p(\tore;X)$ such that $\widehat f$ is supported in $I$,
  \begin{equation}
\label{eq=restriction_de_mult_borne}
 \left\|M_{(\lambda_k)_k} f \right \|_{L^p(\tore;X)} \leq 2 \|f\|_p
 \max \left(\sup_{k \in I} |\lambda_k|, \sqrt{N\sup_{k \in I}
   |\lambda_k | \sup_{a \leq k < b} |\lambda_k -
   \lambda_{k+1}|}\right).
\end{equation}

In other words, the restriction of the multiplier $M_\lambda$ to the
subspace of $L^p(\tore)$ of functions with Fourier transform vanishing
outside of $I$ has a regular norm less than the right-hand side of
this inequality.
\end{lemma}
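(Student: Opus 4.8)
The plan is to reduce to Lemma \ref{thm=multiplicateurs_fourier} by modifying the symbol outside of $I$. Since $\widehat f$ is supported in $I$, the function $M_{(\lambda_k)_k}f$ depends only on the values $\lambda_k$ for $k\in I$: for any sequence $\tilde\lambda=(\tilde\lambda_k)_k$ that agrees with $\lambda$ on $I$ we have $M_{(\lambda_k)_k}f=M_{\tilde\lambda}f$. Consequently, for every Banach space $X$ and every such $f$,
\[\|M_{(\lambda_k)_k}f\|_{L^p(\tore;X)}=\|M_{\tilde\lambda}f\|_{L^p(\tore;X)}\le \|M_{\tilde\lambda}\|_r\,\|f\|_{L^p(\tore;X)},\]
so it suffices to exhibit one admissible $\tilde\lambda$, finitely supported (hence in $\ell^2$), whose regular norm, estimated through Lemma \ref{thm=multiplicateurs_fourier}, is bounded by the right-hand side of \eqref{eq=restriction_de_mult_borne}. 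This simultaneously yields the norm estimate and the statement about the regular norm of the restriction to the subspace of functions with Fourier transform supported in $I$.

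The naive choice $\tilde\lambda=\lambda\,\mathbf 1_I$ is too crude: the two boundary jumps, of sizes $|\lambda_a|,|\lambda_b|\le A$ where $A:=\sup_{k\in I}|\lambda_k|$, force $\|(\tilde\lambda_{k+1}-\tilde\lambda_k)_k\|_2$ to be of order $A$, which combined with $\|\tilde\lambda\|_2\approx\sqrt N\,A$ produces through Lemma \ref{thm=multiplicateurs_fourier} a bound of order $N^{1/4}A$, worse than the target $A$. The remedy is to let $\tilde\lambda$ decay gradually to $0$. Thus I would keep $\tilde\lambda_k=\lambda_k$ for $k\in I$, interpolate linearly from $\lambda_a$ down to $0$ over a window $[a-L,a)$ and from $\lambda_b$ down to $0$ over $(b,b+L]$, and set $\tilde\lambda_k=0$ elsewhere, for an integer $L\ge1$ to be chosen.

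Writing $A=\sup_{k\in I}|\lambda_k|$ and $B=\sup_{a\le k<b}|\lambda_k-\lambda_{k+1}|$, the two windows carry values $\le A$ and slopes $\le A/L$, so that (up to the harmless $\pm1$ in the count of indices of $I$)
\[\|\tilde\lambda\|_2^2\le (N+2L)A^2,\qquad \|(\tilde\lambda_{k+1}-\tilde\lambda_k)_k\|_2^2\le NB^2+\frac{2A^2}{L}.\]
Expanding the product $(N+2L)A^2\bigl(NB^2+2A^2/L\bigr)$, the two cross terms are minimized by the balancing choice $L\approx A/B$, which is admissible since $B\le 2A$ forces this value to be at least $1/2$; with it the product is bounded by $\bigl(NAB+2A^2\bigr)^2$. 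Lemma \ref{thm=multiplicateurs_fourier} then gives
\[\|M_{\tilde\lambda}\|_r\le\frac{2}{\sqrt\pi}\sqrt{NAB+2A^2}\le\frac{2\sqrt3}{\sqrt\pi}\max\bigl(A,\sqrt{NAB}\bigr)<2\,\max\bigl(A,\sqrt{NAB}\bigr),\]
which is exactly \eqref{eq=restriction_de_mult_borne}.

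The only point requiring genuine care, and the main obstacle, is precisely this balancing together with the bookkeeping of constants: one must check that the window length can be taken to be a positive integer (using $B\le 2A$ to bound it below, and a direct treatment of the degenerate case $B=0$, where $\tilde\lambda$ is a constant on $I$ and $M_\lambda$ acts as a scalar), and that the numerical constant issued by Lemma \ref{thm=multiplicateurs_fourier} stays below the stated $2$ after rounding. The remaining computation of the two $\ell^2$ norms of the taper is elementary and I would carry it out directly.
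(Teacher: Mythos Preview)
Your approach is correct and follows the same idea as the paper: extend $\lambda$ by linear tapers outside $I$ so as to obtain a finitely supported symbol $\tilde\lambda$ agreeing with $\lambda$ on $I$, and then apply Lemma~\ref{thm=multiplicateurs_fourier}. The one difference is the choice of taper length. The paper simply takes the window length equal to $N$ (the size of $I$), bounds both $\|\tilde\lambda\|_2$ and $\|(\tilde\lambda_{k+1}-\tilde\lambda_k)_k\|_2$ by $\sqrt{3N}$ times their $\ell^\infty$ norms, observes $\|\tilde\lambda\|_\infty\le A$ and $\|(\tilde\lambda_{k+1}-\tilde\lambda_k)_k\|_\infty\le\max(B,A/N)$, and reads off directly
\[
\|M_{\tilde\lambda}\|_r\le\frac{2}{\sqrt\pi}\sqrt{3N\,A\,\max(B,A/N)}=\frac{2\sqrt3}{\sqrt\pi}\max(A,\sqrt{NAB}).
\]
This sidesteps entirely the issues you flag (integrality of $L$, the degenerate case $B=0$, and checking the constant after rounding): the optimization over $L$ is unnecessary, since the fixed choice $L=N$ already achieves the target constant. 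Your balancing argument is not wrong, only more work than needed.
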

\begin{proof}
  Consider the multiplier $M_{\mu}$ with symbol $(\mu_k)_{k \in \Z}$
  where $\mu_k = \lambda_k$ if $k \in I$, $\mu_k=0$ if $k\leq a-N$ or
  if $k \geq b+N$, and $\mu_k$ is affine on the intervals $[a-N,a]$
  and $[b,b+N]$.

  Since $M_\mu$ and $M_\lambda$ coincide on the space of functions such
  that $\widehat f(k)=0$ for $k \notin I$, the claim will follow from the
  fact that the regular norm of $M_{\mu}$ is less that the right-hand side
  of \eqref{eq=restriction_de_mult_borne}. For this we use Lemma
  \ref{thm=multiplicateurs_fourier}, so we have to dominate $\|(\mu_k)\|_2$
  and $\|(\mu_{k+1} -\mu_k)\|_2$. Since both sequences $(\mu_k)_k$ and
  $(\mu_{k+1} - \mu_k)_k$ are supported in $]a-N,b+N]$ which is of size
  less than $3N$, their $\ell^2$-norm is less than $\sqrt{3N}$ times their
  $\ell^\infty$ norm. The inequality $\sup_k |\mu_k| \leq \sup_{k \in I}
  |\lambda_k|$ is obvious by definition of $\mu_k$. On the other hand we
  have $|\mu_{k+1}-\mu_k| = |\lambda_{k+1}-\lambda_k|$ if $k \in [a,b[$,
  and $|\mu_{k+1}-\mu_k| \leq \sup_{k \in I} |\lambda_k|/N$ otherwise since
  $\mu_k$ is affine on the intervals of size $N+1$ $[a-N,a]$ and $[b,b+N]$.

Thus by Lemma \ref{thm=multiplicateurs_fourier},
\[ \left\|M_{\mu}\right\|_{L^p(\tore;X) \to L^p(\tore;X)}
\leq \frac{2\sqrt 3}{\sqrt \pi} \max\left(\sup_{k \in I} |\lambda_k|,\sqrt{N\sup_{k \in [a,b[} |\lambda_k) | \sup_{k \in      I} |\lambda_k - \lambda_{k+1}|}\right).\]
This concludes the proof since $3 \leq \pi$.
\end{proof}

For all $n \in \N$, $n>0$ we define the function $W_n$ on $\tore$ by
\[\widehat{W_n}(k)= \left\{ 
\begin{array}{ll}
2^{-n+1}(k-2^{n-1}) & \textrm{if }2^{n-1} \leq k \leq 2^n\\
2^{-n}(2^{n+1} - k) & \textrm{if }2^{n} \leq k \leq 2^{n+1}\\
0 & \textrm{otherwise.} 
\end{array}\right.\]
We also define $W_0(z) =z + 1$.

Note that for all $k \in \N$, $\sum_{n \in \N} \widehat{W_n}(k) = 1$
(finite sum).

Since for $n >0$, $\|(\widehat W_n(k))_k \|_2 \leq \sqrt{2^n}$ and $\|
(\widehat W_n(k) - \widehat W_n(k+1))_k\|_2=\sqrt{3/2^{n}}$, Lemma
\ref{thm=multiplicateurs_fourier} implies the multiplier $f \mapsto
W_n \conv f$ has regular norm less than $2 \sqrt{3/\pi} \leq 2$ on
$L^p(\tore)$ any $1\leq p \leq \infty$.  The same is obvious for
$W_0$.

\subsection{Besov spaces of vector-valued analytic functions}
We define the $X$-valued weighted $\ell_p$ spaces $\ell_p^s(\N;X)$ for
$p>0$, $s \in \R$ and a Banach space $X$ as the space of sequences
$(x_n)_{n \in \N} \in X^\N$ such that $\| (x_n)_n\|_{\ell_p^s(\N;X)} =
\left \| (2^{ns} \|x_n\|_X)_{n \in \N} \right \|_p <\infty$.

We will deal in this paper with Besov spaces of ``analytic
functions'', which are defined in the following way. First note that
the reader should take the term ``analytic'' with care. Elements of
the Besov spaces are indeed defined as formal series $\sum_{k \geq 0}
x_k z^k$ with $z \in \tore$. The term analytic means that the formal
series are indexed by $\N$ and not $\Z$ (in particular this has
nothing to do with analytic maps defined on the real analytic manifold
$\tore$).

Let $X$ be a Banach space; $p,q >0$ and $s$ real numbers. The Besov space
$\besovect{p,q}{s}{X}$ is defined as the space of formal series
$f(z)=\sum_{k \in \N} x_k z^k$ with $x_k \in X$ such that $(2^{ns}\|W_n
\conv f\|_p)_{n\in \N} \in \ell_q$, with the norm $\left \| (2^{ns}\|W_n
  \conv f\|_p)_{n\in \N} \right\|_q$. Here by $W_n \conv f$ we mean the
(finite sum) $\sum_{k\geq 0} \widehat W_n(k) x_k z^k$, and this coincides
with the obvious notion when $f \in L^1(\tore;X)$. When $X=\C$ the Besov
space $\besovect{p,q}{s}{X}$ is simply denoted by $\besov{p,q}{s}$.

\begin{rem}[Elements of $\besovect{p,q}{s}{X}$ as functions] It is easy to
  see that when $s>0$, any $f \in \besovect{p,q}{s}{X}$ corresponds to a
  function belonging to $L^p(\tore;X)$ (and therefore also to
  $L^1(\tore;X)$). In this case the series $\sum_{n \geq 0} W_n \conv f$
  indeed converges in $L^p(\tore;X)$ (because $\sum_{n \geq 0} \|W_n \conv
  f\|_p<\infty$). It is also immediate to see that for any $s$, $\|x_k\|_X
  \leq C \|f\|_{\besovect{p,q}{s}{X}} k^{-s}$ for some constant $C>0$, and
  thus that for any $f \in \besovect{p,q}{s}{X}$, $\sum_{k \geq 0} x_k z^k$
  converges for all $z$ in the unit ball $\disc$ of $\C$.

  On the opposite when $s<0$ there are elements $f =\sum_{k \geq 0}x_k z^k
  \in \besovect{p,q}{s}{X}$ such that the sequence $x_k$ is not even
  bounded (and thus cannot represent a function in $L^1(\tore;X)$).
\end{rem}

The space can be equivalently defined as a subspace of
$\ell_q^s(\N;L^p(\tore;X))$ with the isometric injection
\begin{eqnarray*}
\besovect{p,q}{s}{X} & \longrightarrow &
 \ell_q^s(\N;L^p(\tore;X))\\ f & \mapsto & (W_n \conv f)_{n \in \N}
\end{eqnarray*}
Moreover the image of $\besovect{p,q}{s}{X}$ in the isometric
injection is a complemented subspace. The complementation map is given
by
\begin{eqnarray*}
  P: \ell_q^s(\N;L^p(\tore;X)) & \longrightarrow &
  \besovect{p,q}{s}{X}\\ (a_n) & \mapsto & (W_0+W_1) \conv a_0 +
  \sum_{n \geq 1} \left(W_{n-1} + W_n + W_{n+1}\right) \conv a_n
\end{eqnarray*}
and has norm less than $C 2^{2|s|}$ for some constant $C\leq 20$. Indeed,
if $V_n = W_{n-1} + W_n +W_{n+1}$ if $n \geq 1$ and $V_0 = W_0 + W_1$, then
$W_m \conv V_n = 0$ if $|n - m| >2$, and moreover if $|n-m|\leq 2$, $\|(W_m
\conv V_{n}) \conv a_{n}\|_p \leq 4 \|a_n\|_p$ by Lemma
\ref{thm=multiplicateurs_fourier}. This implies that
\begin{eqnarray*} \left\|\sum_{n \geq 0} V_n \conv a_n
  \right\|_{\besovect{p,q}{s}{X}} &\leq& \sum_{-2 \leq \epsilon\leq 2} 4
  \left \| (2^{ns}\|a_{n+\epsilon}\|_p)_{n\in \N} \right\|_q\\
  &\leq & 4\left(2^{-2s} +2^{-s} +1 +2^{s}+2^{2s}\right)\left \|
    (2^{ns}\|a_{n+\epsilon}\|_p)_{n\in \N} \right\|_q.
\end{eqnarray*}

When $p=q$, the Besov space $\besovect{p,q}{s}{X}$ is also denoted by
$\besovect{p}{s}{X}$. In this case $\besov{p}{s}$ is a subspace of
$\ell_p^s(\N;L^p(\tore))$ which is just the $L^p$ space of $\N \times
\tore$ with respect to the product measure of the Lebesgue measure on
$\tore$ and the measure on $\N$ giving mass $2^{nsp}$ to
$\{n\}$. Moreover (at least for $p<\infty$) $\besovect{p}{s}{X}$ is
the closure of $\besov{p}{s} \otimes X$ in the vector-valued $L^p$
space $L^p(\N \times \tore;X)$. This will allow to speak of regular
operators between $\besov{p}{s}$ and an other (subspace of a)
non-commutative $L^p$ space. Note in particular that the above remark
shows that $\besov{p}{s}$ is a complemented subspace of $L^p(\N \times
\tore)$ and that the complementation map $P$ (which does not depend on
$p$) is regular.

As a consequence of the complementation, we have the following
property of Besov spaces:
\begin{thm}
\label{thm=proprietes_des_besov}
The properties of the Besov spaces with respect to duality are: if
$p,q<\infty$
\[ \besovect{p,q}{s}{X}^* \simeq \besovect{p',q'}{-s}{X^*}\] isomorphically
for the natural duality $\langle f,g\rangle = \sum_{n \geq 0} \langle
\widehat f(n),\widehat g(n) \rangle$. Moreover for $M>0$ and any $|s|<M$
the constants in this isomorphism depend only on $M$.
\end{thm}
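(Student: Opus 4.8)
The plan is to establish the duality isomorphism $\besovect{p,q}{s}{X}^* \simeq \besovect{p',q'}{-s}{X^*}$ by exploiting the complemented-subspace structure that was just described. The key observation is that $\besovect{p,q}{s}{X}$ sits inside $\ell_q^s(\N; L^p(\tore;X))$ as a complemented subspace via the isometric injection $J\colon f \mapsto (W_n \conv f)_n$ and the projection $P$ onto its image, with $P$ having norm controlled by $C\,2^{2|s|}$. Since a complemented subspace of a Banach space $Y$ has dual isomorphic to a complemented subspace of $Y^*$, the first step is to identify the dual of the ambient space $\ell_q^s(\N; L^p(\tore;X))$ and then transport the duality through $J$ and $P$.

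\emph{Step one: the ambient duality.} For $p,q<\infty$ one has the standard identifications $L^p(\tore;X)^* \simeq L^{p'}(\tore;X^*)$ and, for the weighted sequence space, $\ell_q^s(\N;Z)^* \simeq \ell_{q'}^{-s}(\N;Z^*)$ (the weight $2^{ns}$ inside the $\ell_q$-norm dualizes to the weight $2^{-ns}$ inside the $\ell_{q'}$-norm). Combining these gives
\[
\ell_q^s(\N; L^p(\tore;X))^* \simeq \ell_{q'}^{-s}(\N; L^{p'}(\tore;X^*))
\]
with the pairing $\langle (a_n)_n, (b_n)_n\rangle = \sum_n \langle a_n, b_n\rangle_{L^p(X),L^{p'}(X^*)}$. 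One caveat requiring attention is whether $X^*$ has the Radon--Nikod\'ym property so that $L^p(\tore;X)^* = L^{p'}(\tore;X^*)$ holds exactly; if one only wants an isomorphism with controlled constants (as the statement asks), one can instead work with the natural isometric inclusion $L^{p'}(\tore;X^*)\hookrightarrow L^p(\tore;X)^*$, which suffices because the relevant functionals arise from the projection.

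\emph{Step two: transport and identify the pairing.} The idea is that $\besovect{p,q}{s}{X}$ is isometrically $J(\besovect{p,q}{s}{X}) = \mathrm{ran}(P)$, so its dual is isomorphic to $P^*(\ell_q^s(\N;L^p(\tore;X))^*)$, a complemented subspace of $\ell_{q'}^{-s}(\N;L^{p'}(\tore;X^*))$. Applying the same complementation description to the target space $\besovect{p',q'}{-s}{X^*}$ (whose defining smoothness is $-s$, dual exponents $p',q'$, and whose ambient space is exactly $\ell_{q'}^{-s}(\N;L^{p'}(\tore;X^*))$), I would check that $P^*$ lands in (and surjects onto) the copy of $\besovect{p',q'}{-s}{X^*}$. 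Concretely, a functional $g$ on $\besovect{p,q}{s}{X}$ corresponds to the series $\sum_n \langle \widehat f(n),\widehat g(n)\rangle$, and one verifies that $W_n\conv f$ pairs against $W_n \conv g$ reproduces $\langle f,g\rangle$ up to the finitely-overlapping $W_n$'s, using that $\widehat W_n$ is supported in $[2^{n-1},2^{n+1}]$ and $\sum_n \widehat W_n \equiv 1$ on $\N$. The norm bounds then follow by composing the norm of $P$ (which is $\leq C\,2^{2|s|}$, hence bounded uniformly for $|s|<M$) with its analogue for the target, and the constant depends only on $M$ as claimed. The main obstacle I anticipate is not the abstract complementation argument but the bookkeeping needed to show that the natural pairing $\sum_n\langle\widehat f(n),\widehat g(n)\rangle$ is precisely the one induced through $J$ and $P$ and that it is well-defined and nondegenerate on the Besov spaces; handling the overlap of consecutive $W_n$'s (each coefficient $\widehat f(n)$ is reconstructed from at most three dyadic blocks) and verifying that the resulting operator is bounded both ways with $M$-dependent constants is the delicate routine step.
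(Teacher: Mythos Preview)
Your approach is essentially the paper's: exploit the complementation $(J,P)$ to identify $\besovect{p,q}{s}{X}^*$ with the range of $P^*$ inside $\ell_{q'}^{-s}(\N;L^{p'}(\tore;X^*))$, then match this with $\besovect{p',q'}{-s}{X^*}$; the paper carries out the bookkeeping you anticipate by computing $P^*g = (V_n\conv g)_n$ with $V_n=W_{n-1}+W_n+W_{n+1}$ and observing $\|(V_n\conv g)_n\|\equival\|(W_n\conv g)_n\|$ via the same overlap argument that bounded $P$. As for your Radon--Nikod\'ym worry, it is harmless here because each $W_n\conv f$ is a trigonometric polynomial with Fourier support in $[2^{n-1},2^{n+1}]$, so the $L^p(X)$--$L^{p'}(X^*)$ duality is only invoked on these finite-dimensional blocks and no RNP hypothesis on $X^*$ is needed.
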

\begin{proof} 
  The boundedness of $P$ formally implies that the dual of
  $\besovect{p,q}{s}{X}$ is isomorphically identified with the set of
  formal series $g(z)=\sum_k \widehat g(k) z^k$ ($\widehat g(k) \in X^*$)
  equipped with the norm coming from the embedding $P^*:g \mapsto (V_n
  \otimes g)_n \in \ell_{q'}^{-s}(\N;L^{p'}(\tore;X^*))$. But the same
  argument as in the proof of the boundedness of $P$ shows that (up to
  constants depending only on $M$ if $|s|<M$)
\[\left\|(V_n \otimes g)_n\right\|_{\ell_{q'}^{-s}(\N;L^{p'}(\tore;X^*))} 
\equival \left\|(W_n \otimes
  g)_n\right|\|_{\ell_{q'}^{-s}(\N;L^{p'}(\tore;X^*))} =
\|g\|_{\besovect{q'}{-s}{X^*}}.\]
\end{proof}
For a real (or complex) number $\alpha$ and an integer $n$, we define the
number $D_n^\alpha$ by $D_0^\alpha = 1$ and for $n \geq 1$,
\[D_n^\alpha = \frac{(\alpha + 1) (\alpha + 2) \dots (\alpha + n)}{n!} =
\prod_{j=1}^n \left(1+\frac \alpha j \right).\]

For any $t \in \R$, we define the maps $I_t$ and $\widetilde I_t$ by
\[ I_t (\sum_{k \geq 0} a_k z^k) = \sum_{k \geq 0} (1+k)^t a_k z^k.\]
\[ \widetilde I_t (\sum_{k \geq 0} a_k z^k) = \sum_{k\geq 0} D_k^t a_k z^k.\]

The boundedness properties of the maps $I_t$ and $\widetilde I_t$ are
described by the following result:
\begin{thm}
\label{thm=dilatation_borne_sur_les_Besov}
 Let $M>0$ be a real number. There is a constant $C = C_M$
  (depending only on $M$) such that for any $1 \leq p,q \leq \infty$, any
  $|t| \leq M$, any $s \in \R$, and any Banach space $X$,
  \[\|I_t: \besovect{p,q}{s}{X} \to \besovect{p,q}{s-t}{X}\|,
  \|I_t^{-1}: \besovect{p,q}{s-t}{X} \to \besovect{p,q}{s}{X}\| \leq C.\]

Moreover if $-1/2 \leq t \leq M$,
\[\|\widetilde I_t: \besovect{p,q}{s}{X} \to \besovect{p,q}{s-t}{X}\|,
\|\widetilde I_t^{-1}: \besovect{p,q}{s-t}{X} \to \besovect{p,q}{s}{X}\|
\leq C.\]
\end{thm}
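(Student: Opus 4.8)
The plan is to reduce both statements to the single-block multiplier estimate of Lemma \ref{thm=multiplicateurs_de_Fourier_restreints}. Write $I_t = M_\lambda$ with $\lambda_k = (1+k)^t$ and $\widetilde I_t = M_{\widetilde\lambda}$ with $\widetilde\lambda_k = D_k^t$. Since Fourier multipliers commute, $W_n \conv (M_\lambda f) = M_\lambda (W_n \conv f)$, and $W_n \conv f$ has Fourier transform supported in the dyadic block $[2^{n-1},2^{n+1}]$, whose length $N$ is comparable to $2^n$. So I only need, for each $n$, to bound the norm of $M_\lambda$ restricted to functions with spectrum in that block, and the point is that this restricted (regular) norm is comparable to $2^{nt}$. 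Once this is established, the definition of the Besov norm gives
\[ \|I_t f\|_{\besovect{p,q}{s-t}{X}} = \left\| \left(2^{n(s-t)} \|M_\lambda(W_n\conv f)\|_p\right)_n\right\|_q \leq C_M \left\|\left(2^{ns}\|W_n \conv f\|_p\right)_n\right\|_q = C_M \|f\|_{\besovect{p,q}{s}{X}}, \]
uniformly in $p,q,X$, which is exactly the index shift $s \mapsto s-t$ claimed.

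For $I_t$ the block estimate is a direct computation. On $[2^{n-1},2^{n+1}]$ one has $\sup_k (1+k)^t \equival 2^{nt}$ and, by the mean value theorem, $|(2+k)^t - (1+k)^t| \equival |t|(1+k)^{t-1} \equival |t| 2^{n(t-1)}$, uniformly for $|t| \leq M$. Plugging $N \equival 2^n$ into the right-hand side of \eqref{eq=restriction_de_mult_borne} yields $\max\bigl(2^{nt}, \sqrt{2^n \cdot 2^{nt}\cdot |t|\,2^{n(t-1)}}\bigr) = \max(1,\sqrt{|t|})\,2^{nt} \leq C_M 2^{nt}$. This proves boundedness of $I_t$; since $I_t^{-1} = I_{-t}$ and $|-t| \leq M$, the same estimate applied with $-t$ gives boundedness of $I_t^{-1}$ with the same constant.

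The map $\widetilde I_t$ is treated identically, the only work being the two analogous estimates for $\widetilde\lambda_k = D_k^t$. Here the decisive simplification is the telescoping identity $D_{k+1}^t = D_k^t(1 + \tfrac{t}{k+1})$, which makes the difference exact: $\widetilde\lambda_{k+1} - \widetilde\lambda_k = \tfrac{t}{k+1} D_k^t$. For the size, summing $\log(1+t/j)$ over $1 \leq j \leq k$ gives $D_k^t = k^t\, e^{O_M(1)}$, so $D_k^t \equival 2^{nt}$ on the block; this is where the hypothesis $t \geq -1/2$ enters, since it forces every factor $1 + t/j \geq 1/2 > 0$, keeping $D_k^t$ positive and comparable to $k^t$ (for $t \leq -1$ the factor would vanish or change sign). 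With $|\widetilde\lambda_{k+1}-\widetilde\lambda_k| \equival \tfrac{|t|}{2^n}\,2^{nt} = |t|\,2^{n(t-1)}$ the block norm comes out as $C_M 2^{nt}$ exactly as before. For the inverse one uses the symbol $\nu_k = 1/D_k^t \equival 2^{-nt}$, for which the same telescoping gives $\nu_{k+1} - \nu_k = -\tfrac{t}{k+1}\nu_{k+1}$, hence $|\nu_{k+1}-\nu_k| \equival |t|\,2^{-n(t+1)}$ and a block norm $C_M 2^{-nt}$, which shifts the index from $s-t$ back to $s$.

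The only genuinely delicate point is keeping all implied constants independent of $p,q,X$ and uniform over $|t| \leq M$ (resp. $-1/2 \leq t \leq M$). Uniformity in $p,q,X$ is automatic, because Lemma \ref{thm=multiplicateurs_de_Fourier_restreints} bounds a \emph{regular} norm, valid on $L^p(\tore;X)$ for every $1\leq p\leq\infty$ and every Banach space $X$, and the passage to Besov norms only rescales the blocks by the scalar factors $2^{\pm nt}$. Uniformity in $t$ reduces to the elementary facts that $t \mapsto \max(1,\sqrt{|t|})$ is bounded on $[-M,M]$ and that the comparison $D_k^t \equival k^t$ holds with constants controlled by $M$ through $\Gamma(t+1)$, which stays bounded above and below for $t+1 \in [1/2,\,M+1]$; this is precisely where the lower bound $t \geq -1/2$ on $\widetilde I_t$ is essential.
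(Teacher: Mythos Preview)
Your proposal is correct and follows essentially the same route as the paper: reduce to the block multiplier estimate of Lemma~\ref{thm=multiplicateurs_de_Fourier_restreints} via commutation with $W_n$, bound the sup and the consecutive differences of the symbol on each dyadic block, and absorb the resulting factor $2^{\pm nt}$ into the Besov weight. The only cosmetic differences are that the paper normalizes the symbol to $\lambda_k/2^{nt}$ before estimating, and handles $\widetilde I_t^{-1}$ via $|1/x-1/y|=|x-y|/|xy|$ rather than your direct telescoping on $\nu_k=1/D_k^t$; both arguments are equivalent.
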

\begin{proof}
  Fix $M>0$ (and even $M \geq 1$) and take $|t|\leq M$. Let us treat
  the case of $I_t$. Let $f = \sum_{k \geq 0} a_k z^k \in
  \besovect{p,q}{s}{X}$. Since the maps $f \mapsto W_n \conv f$ and $f
  \mapsto I_t f$ are both multipliers, they commute, and we have that
\[\left\|I_t f\right\|_{\besovect{p,q}{s-t}{X}} = \left \|
  (2^{|n|s}\|I_t/2^{nt} (W_n \conv f)\|_p)_{n\in \N} \right\|_q.\] To
show that $\|I_t\|\leq C$, it is therefore enough to show that the
multiplier $I_t/2^{nt}$ (the symbol of which is $((1+k)/2^n)^t$) is
bounded by some constant $C$ on the subspace of $L^p(\tore,X)$
consisting of functions whose Fourier transform is supported in
$]2^{n-1},2^{n+1}[$. This follows from Lemma
    \ref{thm=multiplicateurs_de_Fourier_restreints}. We indeed have
    $((1+k)/2^n)^t \leq 2^{|t|}$ for $k \in ]2^{n-1},2^{n+1}[$. To
    dominate the difference $|((2+k)/2^n)^t - ((1+k)/2^n)^t|$ for
    $2^{n-1} < k < 2^{n+1}-1$, just dominate the derivative of $x
    \mapsto (x/2^n)^t$ on the interval $[2^{n-1},2^{n+1}]$ by $|t|
    2^{|t-1|}/2^n \leq M 2^{M+1}/2^n$.  The multiplier $I_t/2^{nt}$ is
    thus bounded by $4 \sqrt M 2^M$.

This shows that 
  \[\|I_t: \besovect{p,q}{s}{X} \to \besovect{p,q}{s-t}{X}\| \leq 4 \sqrt M 2^M
  \]
Since $I_{-t} = {I_t}^{-1}$, the inequality for $I_{-t}$ follows.

By the same argument, to dominate the norms of $\widetilde I_t$ (resp.
its inverse), we have to get a uniform bound on $\sup_k |\lambda_k|$
and $2^n \sup_k |\lambda_{k+1} - \lambda_k|$ where $\lambda_k =
D_k^t/2^{nt}$ (resp. $\lambda_k = 2^{nt}/D_k^t$). This amounts to
showing that there is a constant $C(M)$ (depending on $M$ only) such
that $1/C(M) \leq |D_k^t/2^{nt}| \leq C(M)$ and $|D_{k+1}^t/2^{nt} -
D_k^t/2^{nt}| \leq C(M)/2^n$ for $2^{n-1}\leq k < 2^{n+1}$ (the
inequality $|2^{nt}/D_{k+1}^t - 2^{nt}/D_k^t| \leq C(M)^3/2^n$ will
follow from the formula $|1/x - 1/y| = |y-x|/|xy|$). The first
inequality can be proved by taking the logarithm, noting that
$\log(1+t/j) = t/j + O(1/j^2)$ up to constants depending only on $M$
if $-1/2\leq t \leq M$, and remembering that $\sum_1^N 1/j = \log N +
O(1)$. The second inequality follows easily since $D_{k+1}^t - D_k^t
=t/(k+1) D_k^t$.
\end{proof}

We also use the following characterization of Besov spaces of analytic
vector-valued functions:
\begin{thm} 
\label{thm=besov_comme_analytiques}
Let $M >0$. Then there is a constant $C = C_M$ (depending only
  on $M$) such that for all $0<s<M$, for all Banach spaces $X$, all $1
  \leq p \leq \infty$ and all $f:\tore \to X$,
  \[ C^{-1} \|f\|_{\besovect{p,p}{-s}{X}} \leq \left\|(1-|z|)^{s-1/p}
    f\right\|_{L^p(\disc,dz;X)} \leq \frac C s
  \|f\|_{\besovect{p,p}{-s}{X}}.\]
\end{thm}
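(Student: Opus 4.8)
The plan is to pass to polar coordinates and compare the disc integral with the Littlewood–Paley decomposition. Since $-s<0$, an element $f=\sum_{k\ge0}x_kz^k$ of $\besovect{p,p}{-s}{X}$ need not be a function on $\tore$, but by the Remark its power series converges on the open disc, so for $0\le r<1$ the dilate $f_r(t):=f(re^{2i\pi t})$ is a genuine element of $L^p(\tore;X)$ with $\widehat{f_r}(k)=x_kr^k$. Writing $z=re^{2i\pi t}$ and $dz=r\,dr\,dt$ (the extra factor $r$ being harmless), I would reduce the claim, for $p<\infty$ and with the obvious replacement of integrals by suprema when $p=\infty$, to the two–sided comparison of
\[\int_0^1(1-r)^{ps-1}\,\|f_r\|_{L^p(\tore;X)}^p\,r\,dr\qquad\text{with}\qquad \|f\|_{\besovect{p,p}{-s}{X}}^p=\sum_{n\ge0}2^{-nsp}\|W_n\conv f\|_p^p,\]
splitting the radial integral over the dyadic annuli $A_n=\{\,2^{-n-1}\le 1-r\le 2^{-n}\,\}$, on which $(1-r)^{ps-1}\approx 2^{-nps}2^{n}$ and $\int_{A_n}(1-r)^{ps-1}\,dr=\frac{2^{-nps}(1-2^{-ps})}{ps}$.

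For the lower bound I would recover each frequency block from a single dilate. Because $W_n\conv f_r=(W_n\conv f)_r$ is supported in $[2^{n-1},2^{n+1}]$ and has Fourier coefficients $\widehat{W_n}(k)x_kr^k$, applying the multiplier with symbol $r^{-k}$ restores $W_n\conv f$. For $r\in A_n$ one has $r\ge 1-2^{-n}$, so $\sup_{k\in[2^{n-1},2^{n+1}]}r^{-k}\le(1-2^{-n})^{-2^{n+1}}$ stays bounded and likewise $\sup_k|r^{-k-1}-r^{-k}|\lesssim 2^{-n}$; Lemma \ref{thm=multiplicateurs_de_Fourier_restreints} then bounds this restricted multiplier by an absolute constant, whence $\|W_n\conv f\|_p\le C_0\|f_r\|_p$ for every $r\in A_n$ (the regular norm of $W_n$ itself being $\le2$). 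Multiplying by $(1-r)^{ps-1}$, integrating over $A_n$ and summing over the disjoint annuli gives $\int_0^1(1-r)^{ps-1}\|f_r\|_p^p\,dr\gtrsim \frac{1-2^{-ps}}{ps}\,\|f\|_{\besovect{p,p}{-s}{X}}^p$, and since $0<s<M$ the factor $\big(\tfrac{1-2^{-ps}}{ps}\big)^{1/p}$ is bounded below by a constant depending only on $M$; the lowest block $W_0$ and the region $r\to0$ (where $r^{-k}$ is large) are handled separately by an elementary estimate.

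For the upper bound I would instead expand $f_r=\sum_m(W_m\conv f)_r$ and estimate each dilated block by the restricted multiplier with symbol $r^k$ on $[2^{m-1},2^{m+1}]$: by Lemma \ref{thm=multiplicateurs_de_Fourier_restreints} its regular norm is at most $2r^{2^{m-1}}\max\big(1,\sqrt{2^m(1-r)}\big)$, so for $r\in A_n$ one gets $\|(W_m\conv f)_r\|_p\lesssim\|W_m\conv f\|_p$ when $m\le n$ and $\|(W_m\conv f)_r\|_p\lesssim 2^{(m-n)/2}\exp(-2^{m-n-2})\,\|W_m\conv f\|_p$ when $m>n$. Setting $a_m=\|W_m\conv f\|_p$ and $c_m=2^{-ms}a_m$, the resulting bound for the radial integral becomes, after the annular reduction, $\sum_n\big(\sum_m \kappa_{n-m}c_m\big)^p$, where the kernel $\kappa_j$ equals $2^{-js}$ for $j\ge0$ and decays doubly exponentially for $j<0$. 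This is a convolution, and Young's inequality bounds its $\ell^p$–norm by $\|\kappa\|_{\ell^1}=\frac1{1-2^{-s}}+(\text{a term bounded in terms of }M)\le \frac{C_M}{s}$, which is exactly where the announced factor $1/s$ originates.

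The main obstacle is the bookkeeping in the upper bound: one must extract from Lemma \ref{thm=multiplicateurs_de_Fourier_restreints} both the harmless low–frequency weights (which are $\approx1$ and produce the Hardy kernel $2^{-js}$, hence the $1/s$) and the doubly–exponential decay of the high–frequency tail, and then recognise the whole estimate as a single convolution so that Young's inequality applies uniformly in $p$ and $X$. Keeping every constant independent of $p$ and $X$ — and confirming that the two occurrences of $\frac{1-2^{-ps}}{ps}$ and $\frac{1}{1-2^{-s}}$, one per inequality, are controlled purely in terms of $M$ — is the delicate point; everything else is the routine polar–coordinate and dyadic–annulus reduction described above.
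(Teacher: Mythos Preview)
Your proposal is correct and follows essentially the same route as the paper's own proof: the same polar--coordinate/dyadic--annulus reduction, the same use of Lemma~\ref{thm=multiplicateurs_de_Fourier_restreints} (with symbol $r^{-k}$ for the lower bound and $r^{k}$ for the upper bound), and the same convolution estimate via Young's inequality producing the $1/(1-2^{-s})\sim 1/s$ factor from the low-frequency tail. The only cosmetic difference is that you compute $\int_{A_n}(1-r)^{ps-1}\,dr$ exactly and then check that $\big(\tfrac{1-2^{-ps}}{ps}\big)^{1/p}$ is bounded below in terms of $M$, whereas the paper uses a pointwise lower bound on $(1-r)^{ps-1}$; both give the same conclusion.
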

\begin{proof} The left-hand side inequality is easier. For any $0<r<1$, let
  $f_r$ denote the function $f_r(\theta)=f(re^{i\theta})$. Then
\[\left\|(1-|z|)^{s-1/p}
  f\right\|_{L^p(\disc,dz;X)} = \left(\int_0^1 (1-r)^{ps - 1}
  \|f_r\|_p^p r dr \right)^{1/p}.\] Let $1-2^{-n} \leq r \leq
  1-2^{-n-1}$ with $n \geq 1$. Then $\|f_r\|_p \geq \|W_n \conv
  f_r\|_p/2$. But $f$ is the image of $f_r$ by the multiplier with
  symbol $(r^{-k})_{k \in \Z}$. Note that for $2^{n-1}\leq k \leq
  2^{n+1}$, $r^{-k} \leq 2^4$, and for $2^{n-1}\leq k < 2^{n+1}$,
  $r^{-k-1}-r^{-k} = (1-r)r^{-k-1} \leq 2^{-n+1}2^4=2^{-n+5}$. Thus
  since multipliers commute and since the Fourier transform of $W_n
  \conv f$ vanishes outside of $]2^{n-1},2^{n+1}[$, Lemma
    \ref{thm=multiplicateurs_de_Fourier_restreints} implies
    \[ \|W_n \conv f \|_p \leq 2 \|W_n \conv f_r\|_p 2^5 \leq 2^6
    \|f_r\|_p.\] Moreover $(1-r)^{ps -1} \geq 2^{-ps}
    2^{-nsp+n}$. Integrating over $r$, we thus get that for $n\geq 1$:
\[ 2^{-nsp} \|W_n \conv f \|_p^p \leq C^{p}
\int_{1-2^{-n}}^{1-2^{-n-1}} (1-r)^{ps - 1} \|f_r\|_p^p r dr\] 
where $C$ depends only on $M$.
For $n=0$ the same inequality is very easy. Summing over $p$ and
taking the $p$-th root, we get the first inequality
\[ \|f\|_{\besovect{p,p}{-s}{X}} \leq C \left\|(1-|z|)^{s-1/p}
    f\right\|_{L^p(\disc,dz;X)}.\]

For the right-hand side inequality, note that since $\sum_n \widehat
W_n(k)=1$ for all $k \geq 0$, we have that for any $r>0$
\[ \|f_r\|_p \leq \sum_{n \geq 0} \|W_n \conv f_r\|_p.\]
Then as above since $W_n \conv f_r$ is the image of $W_n \conv f$ by
the Fourier multiplier of symbol $r^k$, Lemma
\ref{thm=multiplicateurs_de_Fourier_restreints} again implies than
\[\|W_n \conv f_r\|_p \leq 2 r^{2^{n-1}} \max(1 ,\sqrt{2^{n+1} (1-r)})
\|W_n \conv f\|_p.\]
If $m$ is such that $1-2^{-m}\leq r \leq 1-2^{-m-1}$ then
\[r^{2^{n-1}} = \left((1-2^{-m-1})^{2^{m+1}}\right)^{2^{n-m-2}} \leq
e^{-2^{n-m-2}}\] and
\[\max(1 ,\sqrt{2^{n+1} (1-r)}) \leq \max(1,\sqrt 2^{n+1-m}).\]
If for $k \in \Z$ one denotes $b_k = 2 e^{-2^{k-2}} \max(1,\sqrt 2^{k+1})
2^{ks}$ one thus has
\[\|W_n \conv f_r\|_p \leq 2^{ms} b_{n-m} 2^{-ns} \|W_n \conv f_r\|_p.\] If
$a_n = 2^{-ns} \|W_n \conv f_r\|_p$ for $n\geq 0$ and $a_n=0$ if $n<0$,
summing the previous inequality over $n$ we thus get
\[ \|f_r\|_p \leq 2^{ms} \sum_{n \geq 0} b_{n-m} a_n = 2^{ms} (a \conv
b)_m.\] Let us raise this inequality to the power $p$, multiply by
$r(1-r)^{ps -1} \leq 2^{-mps} 2^{m+1}$ and integrate on
$[1-2^{-m},1-2^{-m-1}]$. One gets
\[\int_{1-2^{-m}}^{1-2^{-m-1}} (1-r)^{ps - 1}\|f_r\|_p^p r dr \leq (a \conv
b)_m^p.\]
Summing over $m$ this leads to 
\[ \left\|(1-|z|)^{s-1/p} f\right\|_{L^p(\disc,dz;X)} \leq \left( \sum_{m
    \geq 0}(a \conv b)_m^p \right)^{1/p} \leq \|a \conv b
\|_{\ell^p(\Z)}.\] Now note that $\|a \conv b \|_{\ell^p(\Z)} \leq \|a\|_p
\|b\|_1 = \|f\|_{\besovect{p,p}{-s}{X}} \|b\|_1$. We are just left to prove
that $b \in \ell^1(\Z)$ and $\|b\|_1 \leq C/s$ with some constant $C$
depending only on $M$. If $k \geq 0$, we have $|b_k| \leq 2\sqrt 2
e^{-2^{k-2}} 2^{k (M+1/2)}$ which proves that $\sum_{k \geq 0} b_k\leq C_1$
for some constant depending only on $M$. If $k<0$, $|b_k| \leq 2^{ks+1}$,
which proves that $\sum_{k<0} |b_k| \leq 2/(2^s-1) \leq C_2/s$ for some
universal constant. This concludes the proof.
\end{proof}

When $p=2$ and $X$ is a Hilbert space, the preceding result can be
made more precise and more accurate (as $s \to 0$). This will be used
later and was mentionned to the author by Quanhua Xu:
\begin{thm}
\label{thm=besov_comme_analytiques_casHilbert}
Let $M >0$ and $X$ be a Hilbert space. Then for $-M \leq s \leq M$ and for
all $f =\sum_k a_k z^k \in \besovect{2,2}{-s}{X}$,
  \[ \|f\|_{\besovect{2,2}{-s}{X}} \equival 
    \left(\sum_{k=0}^\infty \|a_k\|^2 (1+k)^{-2s}\right)^{1/2} \equival
    \sqrt s \left\|(1-|z|)^{s-1/2} f\right\|_{L^2(\disc,dz;X)}\]
up to constants depending only on $M$.
\end{thm}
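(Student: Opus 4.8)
The plan is to use that, since $X$ is a Hilbert space and $p=2$, the space $L^2(\tore;X)$ is itself a Hilbert space with the monomials $z^k$ orthogonal; Parseval's identity then reduces everything to estimating two explicit families of scalar weights. I would prove the two equivalences separately, the first for all $|s|\le M$ and the second (where the integral over the disc converges) for $0<s\le M$.

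For the first equivalence I would start from Parseval: for $f=\sum_k a_k z^k$ one has $\|W_n\conv f\|_2^2=\sum_k |\widehat{W_n}(k)|^2\|a_k\|_X^2$, hence
\[\|f\|_{\besovect{2,2}{-s}{X}}^2=\sum_{n\ge 0}2^{-2ns}\|W_n\conv f\|_2^2=\sum_{k\ge 0}\|a_k\|_X^2\,\omega_s(k),\qquad \omega_s(k):=\sum_{n\ge 0}2^{-2ns}|\widehat{W_n}(k)|^2.\]
It then remains to show $\omega_s(k)\equival (1+k)^{-2s}$ with constants depending only on $M$. For each fixed $k$ at most two indices $n$ contribute (those with $2^{n-1}\le k\le 2^{n+1}$), each satisfying $2^n\equival 1+k$ so that $2^{-2ns}\equival (1+k)^{-2s}$ up to a factor $2^{\pm cM}$; and since $0\le \widehat{W_n}(k)\le 1$ with $\sum_n\widehat{W_n}(k)=1$ and at most two nonzero terms, one has $\tfrac12\le\sum_n|\widehat{W_n}(k)|^2\le 1$. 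The small cases $k=0,1$ I would check by hand from $W_0=z+1$.

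For the second equivalence I would write $z=re^{i\theta}$ and use Parseval in $\theta$, $\int_{\tore}\|f(re^{i\theta})\|_X^2=\sum_k\|a_k\|_X^2 r^{2k}$, so that up to an absolute constant from the normalisation of the measures,
\[\left\|(1-|z|)^{s-1/2}f\right\|_{L^2(\disc,dz;X)}^2\equival \sum_{k\ge 0}\|a_k\|_X^2\,\beta_s(k),\qquad \beta_s(k):=\int_0^1 (1-r)^{2s-1}r^{2k+1}\,dr.\]
Here $\beta_s(k)$ is the Beta integral $B(2s,2k+2)=\Gamma(2s)\,\Gamma(2k+2)/\Gamma(2k+2+2s)$, and the target is $s\,\beta_s(k)\equival (1+k)^{-2s}$ uniformly in $k\ge 0$ and $0<s\le M$, for then $\sqrt s\,\|(1-|z|)^{s-1/2}f\|_{L^2}\equival(\sum_k\|a_k\|_X^2(1+k)^{-2s})^{1/2}$, which together with the first equivalence concludes. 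The factor $s\Gamma(2s)=\tfrac12\Gamma(2s+1)$ is trapped between positive constants on $(0,M]$, so the whole point reduces to $\Gamma(2k+2)/\Gamma(2k+2+2s)\equival(1+k)^{-2s}$.

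The hard part will be precisely this uniform two-sided control of the ratio of Gamma functions, in particular its uniformity as $s\to 0^+$ and over all $k$ at once, for which the asymptotic $\Gamma(x+a)/\Gamma(x)\sim x^a$ alone is not enough. I expect the cleanest route is to bypass Gamma asymptotics and sandwich $\beta_s(k)$ between genuine Gamma integrals: on $[\tfrac12,1]$ the elementary inequalities $-2(1-r)\le\log r\le-(1-r)$ give $e^{-4k(1-r)}\le r^{2k}\le e^{-2k(1-r)}$, so after discarding the exponentially small contribution of $[0,\tfrac12]$ for the upper bound, restricting to $[\tfrac12,1]$ for the lower bound, and substituting $u=1-r$, one compares $\beta_s(k)$ with $\int_0^\infty u^{2s-1}e^{-cku}\,du=\Gamma(2s)(ck)^{-2s}$ for $c\in\{2,4\}$. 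Both bounds are of the form $\Gamma(2s)\,k^{-2s}$ up to $M$-dependent constants, giving $s\,\beta_s(k)\equival(1+k)^{-2s}$. This is exactly where the $p=2$ Hilbert structure sharpens Theorem~\ref{thm=besov_comme_analytiques}: the exact comparison $s\,\beta_s(k)\equival(1+k)^{-2s}$ carries a single power of $1/s$, which becomes $1/\sqrt s$ after taking square roots, instead of the $1/s$ loss of the general Banach-space estimate.
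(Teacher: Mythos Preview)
Your argument is correct and follows the same overall scheme as the paper: both proofs use Parseval in $L^2(\tore;X)$ to reduce the first equivalence to $\sum_n 2^{-2ns}\widehat{W_n}(k)^2\equival(1+k)^{-2s}$, and the second to the single scalar estimate $\int_0^1(1-r)^{2s-1}r^{2k+1}\,dr\equival \tfrac{1}{s}(1+k)^{-2s}$ for $0<s\le M$.

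The only real difference is in how this last Beta-type integral is handled. The paper computes it exactly by repeated integration by parts,
\[
\int_0^1(1-r)^{2s-1}r^{2k+1}\,dr=\frac{1}{2s\,D_{2k+1}^{2s}},\qquad D_n^{2s}=\prod_{j=1}^n\Bigl(1+\tfrac{2s}{j}\Bigr),
\]
and then invokes the elementary asymptotic $D_{2k+1}^{2s}\equival(1+k)^{2s}$ already established in the proof of Theorem~\ref{thm=dilatation_borne_sur_les_Besov}. You instead recognise the integral as $B(2s,2k+2)$, absorb the blow-up via $s\Gamma(2s)=\tfrac12\Gamma(2s+1)$, and estimate $\Gamma(2k+2)/\Gamma(2k+2+2s)$ by sandwiching $r^{2k}$ between exponentials $e^{-c k(1-r)}$ on $[\tfrac12,1]$. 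This is a legitimate alternative; it avoids citing the $D_n^\alpha$ asymptotic at the cost of a small tail argument (for the lower bound you restrict to $\int_0^{1/2}u^{2s-1}e^{-4ku}\,du$, and one must still check that $\int_{1/2}^\infty u^{2s-1}e^{-4ku}\,du$ is a bounded fraction of $\Gamma(2s)(4k)^{-2s}$ uniformly in $0<s\le M$ and $k\ge1$, which is elementary). Either way one lands on the same estimate, and your observation that the square root accounts for the improvement from $1/s$ to $1/\sqrt s$ over the general Banach-space theorem is exactly the point.
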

\begin{proof}
The first inequality is obvious: indeed, since $X$ is a Hilbert space,
for any integer $n$ we have
\[ \|W_n \conv f\|_{L^2(\tore;X)}^2 = \sum_{k} \widehat{W_n}(k)^2 \|a_k\|^2.\]

For the second inequality everything can be computed explicitely:
\begin{eqnarray*}\left\|(1-|z|)^{s - 1/2 }
  f\right\|_{L^2(\disc,dz;H)}^2 &=& \int_0^1 (1-r)^{2s-1} \sum_{k \geq
  0} \|a_k\|^2 r^{2k+1} dr\\
& = & \sum_{k \geq 0} \|a_k\|^2 \int_0^1 (1-r)^{2s-1}
r^{2k+1} dr.
\end{eqnarray*}
Integrating by parts $2k+1$ times, one gets
\[\int_0^1 (1-r)^{2s-1}
r^{2k+1} dr = \frac{(2k+1)2k (2k-1)\dots 1}{2s (2s+1) \dots (2 s + 2k
  + 1)} = \frac{1}{2s D_{2k+1}^{2s}}.\] Note that $D_{2k+1}^{2s}
\equival (1+k)^{2s}$ uniformly in $k$ and $s$ as long as $|s|<M$. This implies 
\[\left\|(1-|z|)^{s - 1/2 } f \right\|_{L^2(\disc,dz;H)}^2 \equival
\frac 1 s \sum_k (1+k)^{-2s} \|a_k\|^2,\] which concludes the proof.
\end{proof}

The following also holds:
\begin{thm} 
\label{thm=besov_comme_analytiques_avec_derivees}
Let $M >0$. Then there is a constant $C = C_M$ (depending only
  on $M$) such that for all $-1<s<M$, for all Banach spaces $X$, all $1
  \leq p \leq \infty$ and all $f:\tore \to X$,
  \[ C^{-1} \|f\|_{\besovect{p,p}{-s}{X}} \leq |f(0)| + \left
    \|(1-|z|)^{1+s-1/p} f'\right\|_{L^p(\disc,dz;X)} \leq \frac C {1+s}
  \|f\|_{\besovect{p,p}{-s}{X}}.\]
\end{thm}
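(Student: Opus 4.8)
The plan is to reduce everything to Theorem~\ref{thm=besov_comme_analytiques} applied to the derivative. Set $\sigma = s+1$; since $-1<s<M$ we have $0<\sigma<M+1$, so Theorem~\ref{thm=besov_comme_analytiques} (with $M$ replaced by $M+1$, hence a constant depending only on $M$) applies to $f'$ and gives, using $1+s-1/p = \sigma - 1/p$,
\[ C^{-1}\|f'\|_{\besovect{p,p}{-(s+1)}{X}} \leq \left\|(1-|z|)^{1+s-1/p}f'\right\|_{L^p(\disc,dz;X)} \leq \frac{C}{1+s}\|f'\|_{\besovect{p,p}{-(s+1)}{X}}. \]
Thus the disc quantity is two-sided comparable to $\|f'\|_{\besovect{p,p}{-(s+1)}{X}}$, with the factor $\nicefrac{1}{(1+s)}$ landing exactly on the side required by the statement. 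It therefore remains to prove, with a constant depending only on $M$,
\[ \|f\|_{\besovect{p,p}{-s}{X}} \equival |f(0)| + \|f'\|_{\besovect{p,p}{-(s+1)}{X}}. \]

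For this I would use that differentiation is, up to a shift, a Fourier multiplier. Writing $f=\sum_{k\geq0}a_kz^k$, one has $zf'(z)=\sum_{k\geq1}ka_kz^k$, that is $zf' = M_\lambda f$ for the symbol $\lambda_k=k$, and conversely $f-f(0)=M_\mu(zf')$ for the symbol $\mu_k=\nicefrac{1}{k}$ (with $\mu_0=0$). Since $M_\lambda$ and $M_\mu$ commute with each $W_n\conv\cdot$, applying Lemma~\ref{thm=multiplicateurs_de_Fourier_restreints} on each dyadic block $]2^{n-1},2^{n+1}[$ gives $\|W_n\conv M_\lambda g\|_p\lesssim 2^n\|W_n\conv g\|_p$ and $\|W_n\conv M_\mu g\|_p\lesssim 2^{-n}\|W_n\conv g\|_p$, because on that block $\sup|\lambda_k|\approx 2^n$, $\sup|\mu_k|\approx2^{-n}$ and the corresponding differences are controlled accordingly. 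The two extra factors $2^{\pm n}$ match exactly the shift of the Besov index by one, so that $M_\lambda\colon\besovect{p,p}{-s}{X}\to\besovect{p,p}{-(s+1)}{X}$ and $M_\mu\colon\besovect{p,p}{-(s+1)}{X}\to\besovect{p,p}{-s}{X}$ are bounded with \emph{universal} constants.

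The one genuine difficulty is to pass between $zf'=Sf'$ and $f'$, where $S\colon g\mapsto zg$ is the shift; this is the main obstacle, since the spectrum of $f'$ is that of $f$ shifted by one ($\widehat{f'}(k)=(k+1)\widehat f(k+1)$), so no pure multiplier relates them. I would prove that $S$ and the backward shift $T\colon g\mapsto (g-g(0))/z$ are bounded on $\besovect{p,p}{\sigma}{X}$ with norm $\lesssim 2^{|\sigma|}$, hence by a constant depending only on $M$ in the relevant range $|\sigma|\leq M+1$. The identity $W_n\conv(zg)=z\,M_{(\widehat{W_n}(k+1))_k}(g)$ shows, via Lemma~\ref{thm=multiplicateurs_de_Fourier_restreints}, that each block of $zg$ is controlled by the adjacent blocks of $g$ with $O(1)$ multiplier norm; resumming over $n$ exactly as in the estimate for the complementation map $P$ costs only a factor $\lesssim 2^{|\sigma|}$. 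Consequently $\|f'\|_{\besovect{p,p}{-(s+1)}{X}}\equival\|zf'\|_{\besovect{p,p}{-(s+1)}{X}}$.

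It remains to assemble the estimates. For the upper bound, $\|f'\|_{\besovect{p,p}{-(s+1)}{X}}\lesssim\|zf'\|_{\besovect{p,p}{-(s+1)}{X}}=\|M_\lambda f\|_{\besovect{p,p}{-(s+1)}{X}}\lesssim\|f\|_{\besovect{p,p}{-s}{X}}$, while $|f(0)|=\|a_0\|_X\leq\|W_0\conv f\|_{L^1}\leq\|f\|_{\besovect{p,p}{-s}{X}}$ since $\widehat{W_0}(0)=1$. For the lower bound, $\|f-f(0)\|_{\besovect{p,p}{-s}{X}}=\|M_\mu(zf')\|_{\besovect{p,p}{-s}{X}}\lesssim\|zf'\|_{\besovect{p,p}{-(s+1)}{X}}\lesssim\|f'\|_{\besovect{p,p}{-(s+1)}{X}}$, and since the constant function equal to $f(0)$ has $\besovect{p,p}{-s}{X}$-norm exactly $|f(0)|$ (only $W_0$ is nonzero at frequency $0$), we obtain $\|f\|_{\besovect{p,p}{-s}{X}}\lesssim|f(0)|+\|f'\|_{\besovect{p,p}{-(s+1)}{X}}$. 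Combining with the displayed consequence of Theorem~\ref{thm=besov_comme_analytiques} finishes the proof; note that the hypothesis $s>-1$ is used precisely to guarantee $\sigma=s+1>0$, so that Theorem~\ref{thm=besov_comme_analytiques} applies, and it is from there that the $\nicefrac{1}{(1+s)}$ factor on the right-hand side originates.
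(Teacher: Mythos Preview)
Your proof is correct and takes essentially the same approach as the paper's: reduce via Theorem~\ref{thm=besov_comme_analytiques} to the equivalence $\|f\|_{\besovect{p,p}{-s}{X}} \equival |f(0)| + \|f'\|_{\besovect{p,p}{-(s+1)}{X}}$, then establish that via a multiplier-plus-shift argument. The paper's only shortcut is to cite Theorem~\ref{thm=dilatation_borne_sur_les_Besov} directly (using $I_1 g = (zg)'$ with $g=f/z$ once $f(0)=0$, together with the easy fact $\|f/z\|\equival\|f\|$) rather than re-derive your $M_\lambda, M_\mu$ bounds from Lemma~\ref{thm=multiplicateurs_de_Fourier_restreints}; apart from this ordering the two arguments coincide.
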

\begin{proof}
By Theorem \ref{thm=besov_comme_analytiques}, it is enough to show that
\[ \|f\|_{\besovect{p,p}{-s}{X}} \equival |f(0)| +
\|f'\|_{\besovect{p,p}{-s-1}{X}}\]
up to constants depending only on $M$ if $|s|<M$.

Since $\|f\|_{\besovect{p,p}{-s}{X}} \equival |f(0)| +
\|f-f(0)\|_{\besovect{p,p}{-s}{X}}$, one can assume that $f(0)=0$.

But since $I_1 g=(zg)'$ for any $g$, Theorem
\ref{thm=dilatation_borne_sur_les_Besov} implies that
$\|g\|_{\besovect{p,p}{-s}{X}} \equival
\|(zg)'\|_{\besovect{p,p}{-s-1}{X}}$. Applied to $g(z)=f(z)/z$ (recall that
$f(0)=0$) this inequality becomes $\|f'\|_{\besovect{p,p}{-s-1}{X}} \equival
\|z\mapsto f(z)/z\|_{\besovect{p,p}{-s}{X}}$. The inequality 
\[\|z\mapsto f(z)/z\|_{\besovect{p,p}{-s}{X}} \equival
\|f\|_{\besovect{p,p}{-s}{X}}\] is easy and concludes the proof.
\end{proof}

\section{Operator space valued Hankel matrices}

In this section we finally prove the main results stated in the
Introduction, Theorem \ref{thm=thm_principal_Hankel_Besov}.  In the
particular case when $\alpha=\beta=0$, we recover Theorem
\ref{thm=Main_thm}. We prove the two sides of
\eqref{eq=inegalite_principale_Hankel_Besov} separately. 

For the right-hand side, we first recall a proof for the cases when $p=1$
or $p=\infty$ (this was contained in Peller's proof since for
non-commutative $L^1$ or $L^\infty$ spaces, regularity and complete
boundedness coincide; we will still provide a proof which is more precise
as far as constants are concerned). Then we derive the case of a general
$p$ by an interpolation argument.

The left-hand side inequality is then derived from the right-hand side for
$\alpha=\beta=1$ by duality.

We study the optimality of the bounds in Theorem \ref{thm=Main_thm}, and
finally derive Theorem \ref{thm=projection}.

\subsection{Right hand side of \eqref{eq=inegalite_principale_Hankel_Besov}
  for $p=1$}
We first prove that for a formal series $\varphi = \sum_{k \geq 0} \widehat
\varphi(k) z^k$ with $\widehat \varphi(k) \in E$, it is sufficient that
$\varphi$ belongs to $\besov{p}{1/p+\alpha + \beta}$ to ensure that
$\Gamma_\varphi^{\alpha,\beta} \in S^p[E]$. We first treat the case when
$p=1$.

Let $E$ be an arbitrary operator space. Since (formally) $\varphi =
\sum_0^\infty W_n \conv \varphi$, and
$\|\varphi\|_{\besovect{1}{1+\alpha+\beta}{E}} = \sum_{n \geq 0}2^{n(1+
  \alpha + \beta)} \|W_n \conv \varphi\|_1$, by the triangle
inequality replacing $\varphi$ by $W_n \conv \varphi$ it is enough to
prove that, if $\varphi = \sum_{k=0}^m a_k z^k$ with $a_k \in E$,
\[ \left\|\Gamma_\varphi^{\alpha,\beta} \right\|_{S^1[E]} \leq C
\frac{(1+m)^{1+\alpha+\beta}}{\sqrt{(\alpha+1/2)(\beta+1/2)}}
\|\varphi\|_{L^1(\tore;E)}.\]

But we can write
\[ \Gamma_\varphi^{\alpha,\beta} = \int_\tore \left( \varphi(z)
  (1+j)^\alpha (1+k)^\beta \overline z^{j+k} \right)_{0 \leq j,k\leq m}
dz\] and compute, for $z \in \tore$,
\begin{multline*}
  \left\| \left( \varphi(z) (1+j)^\alpha (1+k)^\beta \overline z^{j+k}
    \right)_{0 \leq j,k\leq m}\right \|_{S^1[E]}\\ = \|\varphi(z)\|_E
  \left\| \left( (1+j)^\alpha (1+k)^\beta \overline z^{j+k} \right)_{0 \leq
      j,k\leq m}\right \|_{S^1},
\end{multline*} 
with
\[\left\|  \left( (1+j)^\alpha (1+k)^\beta \overline z^{j+k} \right)_{0 \leq j,k\leq m}\right
\|_{S^1} = \left\| \left((1+j)^\alpha \right)_{j=0\dots
  m}\right\|_{\ell^2} \left\| \left((1+k)^\beta \right)_{k=0\dots
  m}\right\|_{\ell^2}.\]

Thus the lemma follows from the fact that 
\[ \left\| \left((1+j)^\alpha \right)_{j=0\dots m}\right\|_{\ell^2}^2 \leq
C \frac{(1 + m)^{2 \alpha + 1}}{2 \alpha + 1} \] for a constant $C$ which
depends only on $M = \max\{\alpha,\beta\}$ as long as $\alpha,\beta>-1/2$.

\subsection{Right hand side of \eqref{eq=inegalite_principale_Hankel_Besov}
  for $p=\infty$}

The sufficiency for $p=\infty$ is very similar to easy direction in the
classical proof of Nehari's Theorem that uses the factorization $H^1=H^2
\cdot H^2$, which we first recall. Remember that Nehari's Theorem states
that for any (polynomial function) $\varphi(z)=\sum_{n \geq 0} a_n z^n$
with $a_n \in \C$, $\|\Gamma_\varphi\|_{B(\ell^2)} = \|\varphi\|_{{H^1}^*}$
for the duality $\langle \varphi,f\rangle = \sum_n a_n \widehat f(n)$ for
$f \in H^1(\tore)$. With the notation $f_\xi(z) = \sum_n \xi_n z^n$ for
$\xi = (\xi_n)\in \ell^2$, the inequality $\|\Gamma_\varphi\|_{B(\ell^2)}
\leq \|\varphi\|_{{H^1}^*}$ easily follows from the following elementary
facts:
\begin{enumerate}[a.]
\item \label{enumeration1} For any $\xi =(\xi_n), \eta = (\eta_n) \in \ell^2$, 
  \[\langle \Gamma_\varphi \xi,\eta\rangle_{\ell^2} = \sum_{n \geq 0}
  \widehat \varphi(n) \widehat{ f_\xi f_\eta}(n) = \langle\varphi, f_\xi
  f_\eta \rangle.\]
\item \label{enumeration2} The map $\xi \mapsto f_\xi$ is an isometry
  between $\ell^2$ and $H^2(\tore)$.
\item \label{enumeration3}For any $f_1,f_2 \in H^2(\tore)$, $f_1f_2 \in
  H^1(\tore)$ with norm less than $\|f_1\|_{H^2} \|f_2\|_{H^2}$.
\end{enumerate}

Let us now focus on the right-hand side of inequality
\eqref{eq=inegalite_principale_Hankel_Besov} for $p=\infty$. We fix
$\alpha,\beta>0$ and assume that $E \subset B(H)$ for a Hilbert space $H$.
In this proof we use the fact that $H \hat\otimes \overline H \simeq
B(H)_*$ isometrically through the duality $\scal{T}{\xi \otimes \overline
  \eta} = \scal{T \xi}{\eta}$. For a sequence $x=(\xi_n)$ with $\xi_n$ in
some vector space we also use the notation $f_\xi^{\alpha}(z)$ for the
formal series $\sum_{n \geq 0} (1+n)^\alpha z^n \xi_n$.

Let $\varphi \in \besovect{\infty}{\alpha + \beta}{E}$. We wish to prove
that 
\[\| \Gamma_\varphi^{\alpha,\beta}\|_{B(\ell^2(H))} \leq
C/\min(\alpha,\beta) \|\varphi \|_{\besovect{\infty}{\alpha + \beta}{E}}.\]
Since $\besovect{\infty}{\alpha + \beta}{E}$ is naturally isometrically
contained in $\besovect{\infty}{\alpha + \beta}{B(H)}$ which is (by Theorem
\ref{thm=proprietes_des_besov} and the identification $H \hat\otimes
\overline H \simeq B(H)_*$) isomorphic to the dual space of
$\besovect{1}{-\alpha - \beta}{H \widehat \otimes \overline H}$, we are are
left to prove that
\[ \| \Gamma_\varphi^{\alpha,\beta}\|_{B(\ell^2(H))} \leq
C/\min(\alpha,\beta) \|\varphi \|_{\besovect{1}{-\alpha - \beta}{H \widehat
    \otimes \overline H}^*}.\]
As above this inequality follows immediately from the following three
facts:
\begin{enumerate}[a'.]
\item \label{enumerationp1}For any $\xi =(\xi_n) \in \ell^2(H), \eta =
  (\eta_n) \in \ell^2(H)$,
  \[\langle \Gamma_\varphi^{\alpha,\beta} \xi,\eta\rangle_{\ell^2(H)} = \sum_{n \geq 0}
  \langle \widehat \varphi(n), \widehat{ f_\xi^\beta \otimes f_{\overline
      \eta}^\alpha}(n)\rangle_{B(H),H \widehat \otimes \overline H} =
  \langle \varphi, f_\xi^\beta \otimes f_{\overline \eta}^\alpha\rangle.\]
\item \label{enumerationp2} The map $\xi\in \ell^2(H) \mapsto f_\xi^\beta$
  (resp. $\overline \eta = (\overline \eta_n) \in \ell^2(\overline H) \mapsto
  f_{\bar\eta}^\alpha$) is an isomorphism between $\ell^2(H)$ and
  $\besovect{2}{-\beta}H$ (resp. between $\ell^2(\overline H)$ and
  $\besovect{2}{-\alpha}{\overline H}$). Moreover the constants in these
  isomorphisms depend only on $M=\max(\alpha,\beta)$.
\item \label{enumerationp3}For any $f \in \besovect{2}{-\beta}{H}$ and
  $g \in \besovect{2}{-\alpha}{\overline H}$, the series $f \otimes g
  \in \besovect{1}{-\alpha - \beta}{H \widehat \otimes H}$ and moreover
  there is a constant $C$ depending only on $M$ such that
  \[\|f \otimes g\|_{\besovect{1}{-\alpha - \beta}{H \hat\otimes \overline
      H}} \leq \frac{C}{\min(\sqrt \alpha,\sqrt \beta)}
  \|f\|_{\besovect{2}{-\beta}{H}} \|g\|_{\besovect{2}{-\alpha}{\overline
      H}}.
\]
\end{enumerate}

The facts (\ref{enumerationp1}') and (\ref{enumerationp2}') are again
elementary while fact (\ref{enumerationp3}') is not and follows from the
properties of Besov spaces stated in the previous section. Let us prove it.
\begin{rem} In fact the same holds with $H$ and $\overline H$ replaced by
  arbitrary Banach spaces, but in this case one has to replace $C/\min(
  \sqrt \alpha, \sqrt \beta)$ by $C/\min( \alpha, \beta)$.
\end{rem}
\begin{proof}[Proof of (\ref{enumerationp3}')]
From Theorem \ref{thm=besov_comme_analytiques_avec_derivees},
\[ \|f \otimes g\|_{\besovect{1}{-\alpha - \beta}{H \hat\otimes
    \overline H}} \equival |f(0)||g(0)| + \left\|(1-|z|)^{\alpha + \beta} (f
  \otimes g)' \right\|_{L^1(\disc,dz;H \hat\otimes \overline H)}.\] 
Since $(f \otimes g)' = f'\otimes g + f \otimes g'$, (\ref{enumerationp3}')
will clearly follow from the existence of a constant $C$ depending on $M$
only such that
\begin{equation*}
\left\|(1-|z|)^{\alpha + \beta} f' \otimes g \right\|_{L^1(\disc,dz;H
  \hat\otimes \overline H)} \leq \frac C {\sqrt \alpha}
\|f\|_{\besovect 2 {-\beta}{H}} \|g\|_{\besovect 2 {-\alpha}{\overline H}}.
\end{equation*}
But by the Cauchy-Schwarz inequality, we get that
\begin{multline*}
\left\|(1-|z|)^{\alpha + \beta} f'  \otimes g \right\|_{L^1(\disc,dz;H
  \hat\otimes \overline H)} 
\\ 
\leq \left\|(1-|z|)^{\beta + 1/2 }
  f'\right\|_{L^2(\disc,dz;H)} \left\|(1-|z|)^{\alpha - 1/2 }
  g\right\|_{L^2(\disc,dz;\overline H)}
\end{multline*}
For the first term, use again Theorem
\ref{thm=besov_comme_analytiques_avec_derivees} to get
\begin{equation}
\label{eq=first_inequality_for_fprime}
 \left\|(1-|z|)^{\beta + 1/2 } f'\right\|_{L^2(\disc,dz;H)} \equival
 \|f\|_{\besovect{2}{-\beta}{H}}, \end{equation} whereas for the
second term Theorem \ref{thm=besov_comme_analytiques_casHilbert}
implies
\[\left\|(1-|z|)^{\alpha - 1/2 }
  g\right\|_{L^2(\disc,dz;\overline H)} \equival \frac 1 {\sqrt \alpha}
  \|g\|_{\besovect 2 {-\alpha}{\overline H}}.\]
\end{proof}

\subsection{Right hand side of  \eqref{eq=inegalite_principale_Hankel_Besov} for a general $p$.}
Let us first reformulate the right-hand side of
\eqref{eq=inegalite_principale_Hankel_Besov}.

Denote by $D$ the infinite diagonal matrix $D_{j,j} = 1/(1+j)$ and
$D_{j,k}=0$ if $j \neq k$.  Let $p$, $\alpha$ and $\beta$ as in
Theorem \ref{thm=thm_principal_Hankel_Besov}. Define $\widetilde
\alpha=\alpha+1/2p$ and $\widetilde \beta=\beta+1/2p$. Then for any
$\varphi$ 
\[\Gamma_\varphi^{\alpha,\beta} = D^{\nicefrac 1 {2p}} 
\Gamma_\varphi^{\widetilde \alpha, \widetilde \beta} D^{\nicefrac 1
  {2p}}, \] and Theorem \ref{thm=dilatation_borne_sur_les_Besov} implies
that the map $I_{\widetilde \alpha+ \widetilde \beta} :
\besov{p}{\widetilde \alpha+ \widetilde \beta} \to \besov{p}{0}$ is a
regular isomorphism (with regular norms of the map and its inverse
depending only on $\max(\alpha,\beta)$).

The main result of this section is 
\begin{lemma}
\label{thm=application_reguliere}
Let $M>0$. Take $0<\alpha,\beta<M$ and $1\leq p \leq \infty$. The map
\begin{eqnarray*}
T_p: \besov{p}{0} &\to & S^p \textrm{ (or $B(\ell^2)$ if $p=\infty$)}\\ \varphi & \mapsto & D^{\nicefrac 1
  {2p}} \left(\widehat \varphi(j+k)
\frac{(1+j)^\alpha(1+k)^\beta}{(1+j+k)^{\alpha+\beta}}
\right)_{j,k\geq 0}D^{\nicefrac 1 {2p}}
\end{eqnarray*}
is regular, with regular norm less that $C/ (\min (\alpha,\beta))^{1/2+1/2p}$
for some constant $C$ depending only on $M$.
\end{lemma}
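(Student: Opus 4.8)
The plan is to prove the regularity of $T_p$ by interpolation between the endpoint cases $p=1$ and $p=\infty$, using Pisier's identification (Theorem \ref{thm=applications_regulieres_NC}) of the space of regular operators as a complex interpolation space. Since regularity is an interpolation-stable notion, once I establish the correct bounds on the regular norm of the analogous maps at the two endpoints, Stein-type analytic interpolation will give the intermediate estimate with the claimed dependence on $\min(\alpha,\beta)$.

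First I would reformulate $T_p$ so that the endpoints $p=1$ and $p=\infty$ are actually the two endpoints of an interpolation scale. The map $T_p$ factors as the diagonal conjugation $D^{1/2p}\,\cdot\,D^{1/2p}$ applied to a \emph{generalized} Hankel-type matrix whose symbol is $\varphi$ multiplied by the Fourier multiplier with symbol $(1+j)^\alpha(1+k)^\beta/(1+j+k)^{\alpha+\beta}$. The point of writing things this way is that the conjugation by $D^{1/2p}$ is exactly what interpolates the $\besov{p}{0}$ scale and converts the two endpoint estimates into the sharp $p$-dependent constant. So the first real task is to identify, at $p=1$ and $p=\infty$, the regular norm of the maps $\varphi\mapsto (\widehat\varphi(j+k)(1+j)^\alpha(1+k)^\beta/(1+j+k)^{\alpha+\beta})_{j,k}$ from $\besov{1}{\alpha+\beta}$ (resp. $\besov{\infty}{\alpha+\beta}$) into $S^1$ (resp. $B(\ell^2)$). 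These are precisely the right-hand-side estimates already proved in the previous two subsections: the $p=1$ computation gives a regular bound of order $1/\sqrt{(\alpha+1/2)(\beta+1/2)}$ and the $p=\infty$ factorization through $H^2\cdot H^2$ gives a bound of order $1/\min(\sqrt\alpha,\sqrt\beta)$ via fact (\ref{enumerationp3}'). Because for non-commutative $L^1$ and $L^\infty$ spaces regularity and complete boundedness coincide, these boundedness estimates are in fact regular norm estimates, as needed.

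Next I would set up the analytic family. For a complex parameter $z$ in the strip $0\le \Re z\le 1$, I define an operator $T_z$ by conjugating the (fixed) symbol map above with $D^{z/2}$ on both sides, i.e. $T_z(\varphi)=D^{z/2}(\dots)D^{z/2}$, so that $T_{1/p}$ recovers the desired map after a harmless identification of the target $\besov{p}{0}$ with $L^p$ of the product space $\N\times\tore$. On the line $\Re z=0$ the operator is regular from $\besov{\infty}{\alpha+\beta}$ to $B(\ell^2)$ with regular norm controlled by $C/\min(\sqrt\alpha,\sqrt\beta)$, and on the line $\Re z=1$ it is regular from $\besov{1}{\alpha+\beta}$ to $S^1$ with regular norm controlled by $C/\sqrt{(\alpha+1/2)(\beta+1/2)}$. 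Invoking Theorem \ref{thm=applications_regulieres_NC}, which realizes $B_r(L^p,L^p)$ as $[CB(L^\infty,L^\infty),CB(L^1,L^1)]^{1/p}$ with constants independent of $p$, the interpolated regular norm at $\theta=1/p$ is bounded by the geometric mean of the two endpoint bounds raised to the powers $1-1/p$ and $1/p$. A direct computation of this geometric mean, together with the fact that $\alpha+1/2p$ interpolates between $\alpha$ (at $p=\infty$) and $\alpha+1/2$ (at $p=1$), yields a constant of order $1/(\min(\alpha,\beta))^{1/2+1/2p}$, which is exactly the claimed exponent; the precise bookkeeping here is routine but is where the $1/2+1/2p$ exponent is produced.

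The main obstacle, as I see it, is making the analytic interpolation rigorous rather than merely formal: one must verify that $z\mapsto T_z$ is genuinely an admissible analytic family (appropriate boundedness and continuity on the closed strip, analyticity in the interior, suitable growth control) so that Pisier's interpolation identity applies with the stated $p$-independent constants, and one must check that the Besov-to-$L^p$ identifications commute correctly with the $D^{z/2}$ conjugations across the scale. A subtler point is that, unlike in Peller's scalar setting, the endpoint $p=2$ (and indeed every finite $p$) is genuinely non-trivial here because the target is an operator-space-valued space; the saving grace is that we never interpolate the \emph{boundedness} of $T_p$ directly but rather its \emph{regularity}, and regularity is precisely the quantity for which the clean interpolation identity of Theorem \ref{thm=applications_regulieres_NC} holds. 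Once the analytic family is seen to be admissible, the desired estimate follows immediately and uniformly in $p$, with all constants depending only on $M=\max(\alpha,\beta)$.
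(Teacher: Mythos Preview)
Your approach is essentially identical to the paper's: extend $T_p$ to all of $L^p(\N\times\tore)$ via the regular projection $P$ onto $\besov{p}{0}$ (the \emph{source}, not the target), form the analytic family $f(z)=D^{z/2}\,T_\infty\circ P\,D^{z/2}$, and invoke Theorem~\ref{thm=applications_regulieres_NC}. There is one bookkeeping slip to correct: at $p=1$ the map $T_1$ corresponds, after the isomorphism $I_{\alpha+\beta}$, to $\varphi\mapsto\Gamma_\varphi^{\alpha-1/2,\beta-1/2}$ (not to $\Gamma_\varphi^{\alpha,\beta}$), so the relevant endpoint constant is $C/\sqrt{\alpha\beta}\le C/\min(\alpha,\beta)$ and not $C/\sqrt{(\alpha+1/2)(\beta+1/2)}$. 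The latter remains bounded as $\alpha,\beta\to 0$ and would produce the wrong exponent after interpolation; with the correct $p=1$ bound the geometric mean is $(\min(\alpha,\beta))^{-(1-1/p)/2}\cdot(\min(\alpha,\beta))^{-1/p}=(\min(\alpha,\beta))^{-1/2-1/2p}$, as stated.
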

As explained above, this result is equivalent to the right-hand side
inequality in \eqref{eq=inegalite_principale_Hankel_Besov}. More precisely
the above Theorem for some $\alpha,\beta>0$ and $1 \leq p \leq \infty$ is
equivalent to the right-hand side inequality in
\eqref{eq=inegalite_principale_Hankel_Besov} for the same $p$ but with
$\alpha$ and $\beta$ replaced by $\alpha-1/2p$, $\beta-1/2p$. In the proof
below, Pisier's Theorem \ref{thm=applications_regulieres_NC} on
interpolation of regular operators is used, but the reader unfamiliar with
regular operators can as well directly use Stein's complex interpolation
method with vector-valued Besov spaces and Schatten classes.

\begin{proof}[Proof of Lemma \ref{thm=application_reguliere}]
We have already seen that the map $T_p$ is regular(=completely
bounded) when $p=1$ or $p=\infty$. Therefore up to the change of
density given by $D$, $T_p$ is simultaneously completely bounded on
$\besov{1}{0}$ and $\besov{\infty}{0}$, which should imply that $T_p$
is regular.

To check this more rigorously, we use Pisier's Theorem
\ref{thm=applications_regulieres_NC}. Since the Besov space
$\besov{p}{0}$ is a complemented subspace of $L^p(\N \times \tore)$
(where $\N \times \tore$ is equipped with the product of the counting
measure on $\N$ and the Lebesgue measure on $\tore$), and since the
complementation map $P$ is regular and is the same for every $p$,
$T_p$ naturally extends to a map $T_p \circ P:L^p(\N \times \tore) \to
S^p$ which is still completely bounded for $p=1, \infty$.

To show that $T_p$ is regular, we show that $T_p \circ P \in
\left[CB(L^\infty,B(\ell^2)), CB(L^1,S^1)\right]_\theta$ (where the
first $L^\infty$ and $L^1$ spaces are $L^\infty(\N \times \tore)$ and
$L^1(\N \times \tore)$). Since by the equivalence theorem for complex
interpolation $[A_0,A_1]_\theta \subset [A_0,A_1]^\theta$ with
constant $1$ for any compatible Banach spaces $A_0,A_1$ (Theorem 4.3.1
in \cite{MR0482275}), Theorem \ref{thm=applications_regulieres_NC}
will imply that $T_p \circ P$ is regular and hence its restriction to
$\besov{p}{0}$, $T_p$, too.

Consider the analytic map $f(z)$ with values in $CB(L^1,S^1)+
CB(L^\infty,B(\ell^2))$ given by $f(z) = D^{z/2} T_\infty \circ P
D^{z/2}$ ($f$ takes in fact values in $CB(L^\infty,B(\ell^2))$). Then
$f(1/p) = T_p \circ P$. The conjugation by a unitary is a complete
isometry on $B(\ell^2)$ and on $S^1$. Therefore if $Re(z)=0$,
$\|f(z)\|_{CB(L^\infty,B(\ell^2))} = \|T_\infty \circ
P\|_{CB(L^\infty,B(\ell^2))} \leq C/\sqrt{\min(\alpha, \beta)}$ and if $Re(z)=1$,
$\|f(z)\|_{CB(L^1,S^1)} = \|T_1\circ P\|_{CB(L^1,S^1)} \leq
C/\sqrt{\alpha \beta} \leq C/\min(\alpha,\beta)$.
This proves that 
\[\left\| T_p \right\|_{B_r(L^p,S^p)} \leq C/ (\min
(\alpha,\beta))^{1/2+1/2p}.\]%
\end{proof}

\subsection{Left-hand side of \eqref{eq=inegalite_principale_Hankel_Besov}}
\label{part=ineg_gauche}
In this section we assume that the right-hand side of
\eqref{eq=inegalite_principale_Hankel_Besov} holds for
$\alpha=\beta=1$, that is to say the operator
\begin{eqnarray*}\besov{p}{1/p + 2} & \to & S^p\\
\varphi &\mapsto & \Gamma_\varphi^{1,1}
\end{eqnarray*}
is regular for every $1\leq p \leq \infty$.

Fix now $1 \leq p \leq \infty$ and $\alpha,\beta > -1/2p$. We prove that
the map $\Gamma_\varphi^{\alpha,\beta} \mapsto \varphi$ is regular from the
subspace of $S^p$ (or $B(\ell^2)$ if $p=\infty$) formed of all the matrices
of the form $\Gamma_\varphi^{\alpha,\beta}$ to
$\besov{p}{1/p+\alpha+\beta}$.

For $\psi \in \besov{p'}{1/p'+2}$ define the matrix
\begin{eqnarray*}
  \widetilde \Gamma_\psi^{1,1} & = & \left( \frac{D_j^{\alpha +
      1}}{(1+j)^\alpha} \frac{D_k^{\beta + 1}}{(1+k)^\beta} \widehat
  \psi (j+k)\right)_{j,k\geq 0}\\ & = & diag\left(\frac{D_j^{\alpha +
      1}}{(1+j)^{\alpha+1}}\right) \cdot \Gamma_\psi^{1,1} \cdot
  diag\left(\frac{D_k^{\beta + 1}}{(1+k)^{\beta+1}}\right).
\end{eqnarray*}
First note that since $\sup_{-1/2 \leq \alpha \leq M} \sup_{j\geq 0}
D_j^{\alpha+1}/(1+j)^{\alpha+1} < \infty$ the assumption with $p'$
implies that the operator $T:\psi \mapsto \widetilde
\Gamma_\psi^{1,1}$ is also regular from $\besov{p'}{1/p' + 2}$ to
$S^{p'}$ with regular norm bounded by some constant depending only on
$M$.

Recall that by Theorem \ref{thm=proprietes_des_besov}
$\besov{p}{-1/p'-2} \simeq (\besov{p'}{1/p' + 2})^*$ if $p >1$ (and
$(\besov{p}{-1/p'-2})^* \simeq \besov{p'}{1/p' + 2}$ if
$p<\infty$). Since $\besov{p}{1/p-3}$ is complemented in
$\ell_{p}^{1/p -3}(\N;L^{p})$ with a regular complementation map,
Theorem \ref{thm=regularity_au_dual} implies that the dual map $T^* :
S^{p} \to \besov{p}{-1/p'-2} = \besov{p}{1/p-3}$ is also regular.

It is now enough to compute explicitly the restriction of $T^*$ to the set
of matrices of the form $\Gamma_\varphi^{\alpha,\beta}$ to conclude. Indeed
for any analytic $\varphi: \tore \to \C$ such that
$\Gamma_\varphi^{\alpha,\beta} \in S^p$ (or $B(\ell^2)$), and any $\psi \in
\besov{p'}{1/p' + 2}$ we have
\begin{eqnarray*}
\left \langle T^* \Gamma_\varphi^{\alpha,\beta},\psi\right \rangle &=&
\left \langle \Gamma_\varphi^{\alpha,\beta},T \psi\right \rangle\\ 
&=& \sum_{j,k\geq 0} D_j^{\alpha + 1} D_k^{\beta + 1} \widehat
\varphi(j+k) \widehat \psi(j+k)\\ 
& = & \sum_{n \geq 0} D_n^{\alpha+\beta+3} \widehat
  \varphi(n)\widehat \psi(n)\\ 
& = & \langle\widetilde I_{\alpha+\beta+3}\varphi,\psi\rangle.
\end{eqnarray*}

We used that for all $\alpha,\beta \in \R$, and all $n \in \N$
\[ \sum_{j + k = n} D_j^\alpha D_k^\beta = D_n^{\alpha +  \beta + 1},\]
which follows from the equality $\sum_{n \geq 0} D_n^\alpha x^n =
(1+x)^{-\alpha -1}$ for $|x|<1$.

Thus we have that $T^* \Gamma_\varphi^{\alpha,\beta} = \widetilde
I_{\alpha+\beta+3}\varphi$. By Theorem
\ref{thm=dilatation_borne_sur_les_Besov} the map $\left(\widetilde
I_{\alpha+\beta+3}\right)^{-1}$ is regular as a map from
$\besov{p}{1/p-3}$ to $\besov{p}{1/p+\alpha+\beta}$. Hence the map
$\Gamma_\varphi^{\alpha,\beta} \mapsto \varphi$ is regular from the
subspace of $S^p$ formed of all the matrices of the form
$\Gamma_\varphi^{\alpha,\beta}$ to $\besov{p}{1/p+\alpha+\beta}$. This
concludes the proof (it is immediate from the proof that the regular
norm of this map only depends on $M$).

\subsection{Optimality of the constants}
\label{part=optimality_of_constants}
In this last part we show that the inequality 
\begin{equation}
\label{eq=inegalite_prinicipale}
C^{-1} \left\|\varphi\right\|_{\besovect{p}{1/p}{E}}\leq
\left\|\Gamma_\varphi \right\|_{S^p[E]} \leq C \sqrt p
\left\|\varphi\right\|_{\besovect{p}{1/p}{E}}
\end{equation} in Theorem
\ref{thm=Main_thm} is optimal even when $E=\C$ (up to constants not
depending on $p$). This observation is due to {\'E}ric Ricard who kindly
allowed to reproduce his proof here.

The fact that the left-hand side of \eqref{eq=inegalite_prinicipale} is
optimal is obvious: indeed if $\varphi(z)=1$ then $\Gamma_\varphi$ is a
rank one orthogonal projection and hence $\left\|\Gamma_\varphi
\right\|_{S^p}=1 = \left\|\varphi\right\|_{\besov{p}{1/p}}$ for any
$p$. 

For the right-hand side inequality consider the positive integer $n$ such
that $n\leq p < n+1$. Let $a_1,\dots,a_n \in \C$ and consider the function
$\varphi_a=\sum_{k=0}^n a_k z^{2^k}$. We clearly have
\[ \left\|\varphi_a\right\|_{\besov{p}{1/p}} = (\sum_{k=0}^n 2^{k}
|a_k|^p)^{1/p} \leq 2^{\nicefrac{n+1} p} \max_{k} |a_k| \leq 4 \max_k
|a_k|,\] and the following lemma therefore implies that the ratio
$\left\|\varphi_a\right\|_{\besov{p}{1/p}} /\left\|\Gamma_{\varphi_a}
\right\|_{S^p}$ can be as small as $12/\sqrt n$, which shows the optimality
of the right-hand side of \eqref{eq=inegalite_prinicipale}.
\begin{lemma}
  For any $1 \leq p \leq \infty$ and any (finite) sequence $a =
  (a_k)_{k\geq 0}$ we have
\[ \left\|\Gamma_{\varphi_a} \right\|_{S^p} \geq  \frac 1 3 \|a\|_{\ell^2}.\]
\end{lemma}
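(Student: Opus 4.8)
The plan is to find a concrete vector $\xi \in \ell^2$ on which the Hankel operator $\Gamma_{\varphi_a}$ acts with large norm, thereby bounding $\|\Gamma_{\varphi_a}\|_{S^p}$ from below by $\|\Gamma_{\varphi_a}\|_{S^\infty} = \|\Gamma_{\varphi_a}\|_{B(\ell^2)}$ (recall $\|\cdot\|_{S^p} \geq \|\cdot\|_{S^\infty}$ for every $p$, so it suffices to treat the operator-norm case). Since $\varphi_a = \sum_{k=0}^n a_k z^{2^k}$, the matrix entries of $\Gamma_{\varphi_a}$ are $\widehat{\varphi_a}(i+j) = a_k$ whenever $i+j = 2^k$ for some $k$, and $0$ otherwise. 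The key structural observation is that the exponents $2^k$ are lacunary (each is at least twice the previous), so the supports of the different ``diagonals'' $\{(i,j): i+j = 2^k\}$ are well separated.

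First I would exploit this lacunarity to show that the $n+1$ anti-diagonals behave almost orthogonally. Concretely, I expect to choose a test vector built from the standard basis vectors indexed near the points $2^{k-1}$, so that each term $a_k z^{2^k}$ contributes a block that essentially does not interfere with the others. A natural candidate is $\xi = \sum_{k} c_k e_{m_k}$ for suitable indices $m_k$ and scalars $c_k$; applying $\Gamma_{\varphi_a}$ picks out the entries $a_k$ along the corresponding anti-diagonal. The goal is to produce a lower bound of the shape $\|\Gamma_{\varphi_a}\xi\|_2^2 \gtrsim \sum_k |a_k|^2 \|\xi\|_2^2$, which after normalization gives $\|\Gamma_{\varphi_a}\|_{B(\ell^2)} \gtrsim \|a\|_{\ell^2}$ with the explicit constant $1/3$.

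The step I expect to be the main obstacle is controlling the off-diagonal interactions between distinct lacunary blocks and making the constant come out to exactly $1/3$. A clean way to organize this is to split $\Gamma_{\varphi_a}$ as a sum of rank-structured pieces, one per frequency $2^k$, and to estimate cross terms using that $2^{k+1} \geq 2\cdot 2^k$ forces the relevant index ranges to overlap only mildly; the lacunary gaps should make the cross terms a geometrically summable perturbation of the diagonal contribution, and a crude bound on this geometric series is what produces a universal constant like $1/3$ rather than something $p$-dependent. An alternative, possibly cleaner, route would be to test against a single vector supported on indices $\{2^{k-1}\}_k$ and compute $\langle \Gamma_{\varphi_a}\xi, \eta\rangle$ for a matched pair $\xi,\eta$, reducing the problem to a quadratic form in the $a_k$ whose lacunary structure makes its diagonal dominate; in either approach the crucial point is that the estimate is uniform in $p$, which is exactly what drives the optimality of the $\sqrt{p}$ growth in Theorem~\ref{thm=Main_thm}.
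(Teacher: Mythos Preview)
Your test-vector approach is correct and actually yields a stronger bound than stated, but you are making it far harder than necessary. The first column of any Hankel matrix $\Gamma_\varphi$ is $(\widehat\varphi(i))_{i\geq 0}$, so testing against the single unit vector $e_0$ already gives
\[
\|\Gamma_{\varphi_a}\|_{B(\ell^2)} \;\geq\; \|\Gamma_{\varphi_a} e_0\|_{\ell^2}
\;=\; \Big(\sum_{i\geq 0} |\widehat{\varphi_a}(i)|^2\Big)^{1/2}
\;=\; \|a\|_{\ell^2},
\]
with constant $1$, no lacunarity, no cross-term analysis, no geometric series. Combined with $\|\cdot\|_{S^p}\geq\|\cdot\|_{B(\ell^2)}$ this finishes the lemma in one line. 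All the machinery you anticipate (blocks near $2^{k-1}$, almost-orthogonality of anti-diagonals, controlling interactions to extract the constant $1/3$) is superfluous here.

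The paper takes a genuinely different route: after the same reduction to the operator norm it invokes Nehari's theorem $\|\Gamma_{\varphi_a}\|_{B(\ell^2)} = \|\varphi_a\|_{(H^1)^*}$ and then dualizes the classical Paley inequality $\bigl(\sum_k |\widehat f(2^k)|^2\bigr)^{1/2}\leq 3\|f\|_{H^1}$, which is where the constant $1/3$ comes from. Your direct argument is more elementary and sharper; the paper's argument, while giving a weaker constant, places the estimate in the Nehari/Paley framework that is thematically natural for Hankel operators, and it is this $(H^1)^*$ viewpoint (rather than the constant) that the paper is emphasising.
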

\begin{proof}
  Since $\| \cdot\|_{S^p} \geq \| \cdot\|_{B(\ell^2)}$ for any $1\leq p
  \leq \infty$, and since by Nehari's Theorem
  \[ \|\Gamma_{\varphi_a}\|_{B(\ell^2)} = \| \varphi_a\|_{{H^1}^*},\] the
  statement follows from the inequality $\| \varphi_a\|_{{H^1}^*} \geq
  \|a\|_{\ell^2}/3$, which is the dual inequality of the classical Paley
  inequality
  \[ \left(\sum_{k \geq 0} |\widehat f(2^k)|^2 \right)^{1/2} \leq 3
  \|f\|_{H^1}\] which holds for any $f \in H^1(\tore)$.
\end{proof}

\subsection{The projection}
As in the introduction, $P_{Hank}$ will denote the natural projection from
the space of infinite $\N \times \N$ matrices onto the space of Hankel
matrices. The boundedness properties of $P_{Hank}$ stated in Theorem
\ref{thm=projection} are formal consequences of Theorem \ref{thm=Main_thm}.

\begin{proof}[Proof of Theorem \ref{thm=projection}]
  Let $1<p,p'<\infty$, with $1/p+1/p'=1$. Since for the identification
  $(S^p)^* = S^{p'}$, ${P_{Hank}}^* = P_{Hank}$, we can restrict ourselves
  to the case when $1 < p \leq 2$. We thus have to show that 
  \begin{equation} \label{eq=norme_projection}
\|P_{Hank}\|_{S^p \to S^p} \equival \|P_{Hank}\|_{B_r(S^p,S^p)}
  \equival \sqrt{p'}
\end{equation}
up to constants not depending on $p$.

This follows from Theorem \ref{thm=Main_thm}. More precisely let $T:\psi
\mapsto \Gamma_\psi$ defined from $\besov{p'}{1/p'}$
to $S^{p'}$. Then by Theorem \ref{thm=Main_thm}, we have that  
\[ \|T\|_{\besov{p'}{1/p'} \to S^{p'}} \equival
\|T\|_{B_r(\besov{p'}{1/p'},S^{p'})} \equival \sqrt{p'}.\]

As in part \ref{part=ineg_gauche} this implies (for the natural dualities)
that
\[ \|T^*\|_{S^p \to \besov{p}{-1/p'}} \equival
\|T^*\|_{B_r(S^p,\besov{p}{-1/p'})} \equival \sqrt{p'}.\]

But $T^* (a_{j,k})_{j,k\geq 0} = \sum_{j,k \geq 0} a_{j,k} z^{j+k}$. Thus
we have the following factorization of $P_{Hank}$:
\[ \xymatrix{ {S^p} \ar[r]^{P_{Hank}} \ar[d]^{T^*} & {S^p} \\
  \besov{p}{-1/p'} \ar[r]^{I_{-1}} & {\besov{p}{1/p}} \ar[u]^{T}}.\] This
concludes the proof since $I_1$ (resp. $T$) is a regular isomorphism
bewteen $\besov{p}{-1/p'}$ and $\besov{p}{1/p}$ (resp. between
$\besov{p}{1/p}$ and the subspace of Hankel matrices in $S^p$), and the
regular norms of these isomorphisms as well as their inverses can be
dominated uniformly in $p$ (recall that $1<p \leq 2$).
\end{proof}

\subsection*{Acknowledgement} The author would like to thank his adviser
Gilles Pisier for suggesting the problem. He also thanks Quanhua Xu for
pointing out the result from Lemma
\ref{thm=besov_comme_analytiques_casHilbert}, and {\'E}ric Ricard for
allowing to reproduce his argument in part
\ref{part=optimality_of_constants}, and for other useful discussions
regarding the optimality of the constants.

\bibliographystyle{plain} \bibliography{biblio}

\end{document}